\tikzset{axis/.style={&lt;-&gt;}}
\newcommand\reallywidehat[1]{%
\savestack{\tmpbox}{\stretchto{%
  \scaleto{%
    \scalerel*[\widthof{\ensuremath{#1}}]{\kern-.6pt\bigwedge\kern-.6pt}%
    {\rule[-\textheight/2]{1ex}{\textheight}}
  }{\textheight}%
}{0.5ex}}%
\stackon[1pt]{#1}{\tmpbox}%
}
 \definecolor{MyBlue}{rgb}{0.05, 0.25, 0.65}
 \definecolor{MyRed}{rgb}{0.90, 0.05, 0.05}
\definecolor{MyGreen}{rgb}{0.05, 0.90, 0.05}
\newcommand{\C}[1]{\mathcal{#1}}
\newtheorem{theorem}{Theorem}[section]
\newtheorem{proposition}[theorem]{Proposition}
\newtheorem{definition}[theorem]{Definition}
\newcommand{\CA}{\mathrm{C_{u}(A)}}
\newcommand{\id}{\mathrm{id}}
\newcommand{\AC}{\mathrm{AC}}
\newcommand{\Mor}{\mathrm{Mor}}
\newcommand{\Cat}{\mathrm{\mathbf{Cat}}}
\newcommand{\To}{\Rightarrow}
\newcommand{\MLTT}{\mathrm{MLTT}}
\newcommand{\pr}{\textnormal{\texttt{pr}}}
\newcommand{\BST}{\mathrm{BST}}
\newcommand{\Set}{\mathrm{\mathbf{Set}}}
\newcommand{\op}{\mathrm{op}}
\newcommand{\Fun}{\mathrm{Fun}}
\newcommand{\Ob}{\mathrm{Ob}}
\newcommand{\PSh}{\mathrm{PSh}}
\begin{document}

\date{}

\title{\textbf{From the Sigma-type to the Grothendieck construction}}

\author{Iosif Petrakis\\	
Mathematics Institute, Ludwig-Maximilians-Universit\"{a}t M\"{u}nchen\\
petrakis@math.lmu.de}  

%





\maketitle

\begin{abstract}
\noindent 
We translate properties of the Sigma-type in Martin-L\"of Type Theory $(\MLTT)$ to properties of
the Grothendieck 
construction in category theory. Namely, equivalences in $\MLTT$ that involve the Sigma-type motivate 
isomorphisms
between corresponding categories that involve the Grothendieck construction. The type-theoretic axiom of
choice and 
the ``associativity'' of the Sigma-type are the main examples of this phenomenon that are treated here.

\textit{Keywords}: Martin-L\"of type theory, type-theoretic axiom of choice, category theory, 
Grothendieck construction
\end{abstract}


\section{Introduction}
\label{sec: intro}

\noindent
In category theory the Grothendieck construction is a general method of generating fibrations that
generalises the semidirect product of monoids (see~\cite{BW12}, section 12.2). There are various instances 
in which the Grothendieck construction ``appears'' in connection to Martin-L\"of Type Theory $(\MLTT)$.
For example, Hofmann in~\cite{Ho97}, p.~140, defined the comprehension of a family over a groupoid as 
a special case
of the Grothendieck construction. In book-HoTT~\cite{HoTT13}, section 6.12, it is mentioned that the 
Sigma-type $\sum_{x : W}P(x)$, where
$W$ is a higher inductive type and $P$ is a type-family over $W$, is, from a category-theoretic point
of view, the 
``Grothendieck construction'' of $P$. In~\cite{Pa16} Palmgren used the Grothendieck construction and 
the fact that
this construction can appropriately be iterated, in order to naturally model dependent type theory in
the form 
of contextual category. Quite earlier, in~\cite{Ob89} Obtu\l owicz had also applied an iterated version 
of the 
Grothendieck construction to a hierarchy of indexed categories.  
The Grothendieck construction was also used by Hyland and Pitts in~\cite{HP89}, pp.~182-184, in their
development of categorical models of the calculus of constructions. For the relation of the Grothendieck
construction to the Chu construction see~\cite{Pe21c}.

Here we are concerned with two variants of the Grothendieck construction: the 
Grothendieck construction on $\Set$-valued presheaves and the Grothendieck construction on $\Cat$-valued
presheaves.
The first, also known as the category of elements, was ``first done by Yoneda 
and developed by Mac Lane well before Grothendieck'' (see~\cite{MM92}, p.~44), and it is used e.g., 
in the proof of the fact that every $\Set$-valued presheaf is a colimit of representable presheaves 
(see~\cite{MM92}, pp.~42-43). The second, is the original construction 
of Grothendieck (see~\cite{Gr71a}).

Our main observation is that certain equivalences in $\MLTT$ that involve the Sigma-type
motivate isomorphisms between corresponding categories that involve the Grothendieck construction.
The type-theoretic axiom of choice (Theorem~\ref{thm: ac}) and 
the ``associativity'' of the Sigma-type (Theorem~\ref{thm: assoc}) are the main examples of
this phenomenon that are treated here. 
The proof of Theorem~\ref{thm: ac} can also be seen as the translation the proof of the type-theoretic
axiom of choice.
Actually, our proof is closer to the translation of the type-theoretic proof in Bishop Set Theory $(\BST)$ 
(see~\cite{Pe19} and~\cite{Pe20}), and it requires the use both of $\Set$-valued presheaves
and $\Cat$-valued presheaves.

The aforementioned phenomenon is rooted to the fact that quite often the Grothendieck construction 
has a behavior in category theory analogous to that of the Sigma-type in $\MLTT$, or to that of the
disjoint union 
of a set-indexed family of (Bishop) sets in ($\BST$) set theory (see~\cite{Pe19},~\cite{Pe20} for 
the Bishop case).
This is clear for the definition of the 
objects of the constructed category. 
The ``interpretation'' of the Grothendieck construction as the categorical version of the disjoint union
of sets is 
justified, for example, by the role of the Grothendieck construction in the proof of equivalence between 
the two ways of describing 
families of categories; the \textit{pointwise indexing} and the \textit{display indexing} given by a
fibration.
It is exactly the role played by the disjoint union of a family of sets in the proof of equivalence
between the 
two corresponding ways of describing families of sets. Both, the disjoint union and the Grothendieck
construction, 
are used in the proof of getting a display indexing from a pointwise indexing (see~\cite{Ja99},
pp.~20-21 and p.~111). In~\cite{Ja99}, p.~29, Jacobs mentions that this switching between the two
representations
of families of categories through the Grothendieck construction ``is an extension of what we have
for sets''. What
our analysis here shows is that this is not all an accident.


%

\section{The product set of a $\Set$-valued presheaf}
\label{sec: GCset}

Throughout this paper $\C C, \C D$ are small categories, $\Fun(\C C, \C D)$ is the (small) category 
of functors from
$\C C$ to $\C D$, $\Set$ is the category of sets, $\PSh(\C C) = \Fun(\C C^{\op}, \Set)$ is the category
of $\Set$-valued
presheaves on $\C C$ (or contravariant functors from $\C C$ to $\Set$), $R \colon (\C C \times \C D)^{\op}
\to \Set$ 
is a $\Set$-valued presheaf on the (small) category $\C C \times \C D$, and $\Cat$ is the category of 
small categories.
For all categorical notions not defined here we refer to \cite{Aw10},~\cite{BW12}. 
We do not include the proofs of facts that are straightforward to show.

%

\begin{definition}[The Yoneda-Mac Lane construction]\label{def: GCset}
If $P = (P_0, P_1) \colon \C C^{\op} \to \Set \in \PSh(\C C)$, the \textit{category of elements} 
$\Sigma(\C C, P)$ of $P$ has objects pairs $(a, u)$, where $a \in \Ob_{\C C}$ and $u \in P_0(a)$. 
We denote the
disjoint union of the sets $P_0(a)$, where $a \in \Ob_{\C C}$, by 
$$\sum_{a \in \Ob_{\C C}}P_0(a).$$
A morphism $f^* \colon (a, u) \to (b, v)$ is a morphism $f \colon a \to b$ such that $[P_1(f)](v) = u$. 
If $g^* \colon (b, v)
\to (c, w)$, then $g^* \circ f^* = g \circ f$, and $1_{(a, u)} = 1_a$. 
\end{definition}

More standard notations for the category of elements are  
$$\int(\C C,P), \ \ \ \int_{\C C} P,$$
but here we follow Palmgren's notation used in~\cite{Pa16} for it. Since $\C C$ is small, the 
category $\Sigma(\C C, P)$ 
is also small. An implementation of the category of elements in $\MLTT$ 
would treat $\Ob_{\C C}$ as a type in some universe of types 
$\C U$, $P_0$ as a type family $P_0 \colon \Ob_{\C C} \to \C U$ over $\Ob_{\C C}$, and the objects of 
$\Sigma(\C C, P)$ as the type
$$\sum_{a : \Ob_{\C C}}P_0(a).$$
The connection of the category of elements with the $\Sigma$-type of $\MLTT$ fully justifies 
Palmgren's notation. 
It is immediate to see that $\pr_1^{P} \colon \Sigma(\C C, P) \to \C C$, where 
$\pr_1^{P} = \big((\pr_1^{P})_0, (\pr_1^{P})_1\big)$ with $(\pr_1^P)_0(a, u) = a$ 
and $(\pr_1^P)_1(f^*) = f$, is a functor. Actually, $\pr_1^P$ is a discrete fibration.
If $a \in \Ob_{\C C}$, the functor $\C Y^a \colon \C C^{\op} \to \Set$ is defined by $\C Y^a_0(b) = 
\Mor^{\C C}(b, a)$
and  if $f \colon b \to c$ in $\C C$, then $\C Y^a_1(f) \colon \Mor^{\C C}(c, a) \to \Mor^{\C C}(b, a)$
is defined by 
the rule $h \mapsto h \circ f$, for every $h \in \Mor^{\C C}(c, a)$.  
It is immediate to see that $\Sigma(\C C, \C Y^a)$ is the slice category $\C C/a$, and
it is straightforward to show  
that the slice category $\PSh(\C C)/P$ is equivalent to $\PSh(\Sigma(\C C, P))$.
%
%
%
%
%
%

%
%


\begin{proposition}\label{prp: sigma1}
We have that $\Sigma(\C C, -)  = (\Sigma(\C C, -)_0, \Sigma(\C C, -)_1) \colon \PSh(\C C) \to \Cat$, where
$$\Sigma(\C C, -) = (\Sigma(\C C, -)_0, \Sigma(\C C, -)_1),$$ 
$$\Sigma(\C C, -)_0(P) = \Sigma(\C C, P); \ \ \ \ P \in \PSh(\C C),$$
$$\Sigma(\C C, -)_1(\eta \colon P \To Q) \colon \Sigma(\C C, P) \to \Sigma(\C C, Q)$$
$$[\Sigma(\C C, -)_1(\eta)]_0 \colon \sum_{a \in \Ob_{\C C}}P_0(a) \to \sum_{a \in \Ob_{\C C}}Q_0(a)$$
$$[\Sigma(\C C, -)_1(\eta)]_0 (a, u) = (a, \eta_a(u)); \ \ \ \ (a, u) \in \sum_{a \in \Ob_{\C C}}P_0(a).$$
Moreover, if $f^* \colon (a, u) \to (b, v)$, then 
$[\Sigma(\C C, -)_1(\eta)]_1(f^*) = f$.
\end{proposition}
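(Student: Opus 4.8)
The plan is to verify functoriality in two layers: first that, for each natural transformation $\eta \colon P \To Q$, the assignment $\Sigma(\C C, -)_1(\eta)$ really is a functor $\Sigma(\C C, P) \to \Sigma(\C C, Q)$, and then that the assignment $\eta \mapsto \Sigma(\C C, -)_1(\eta)$ respects identities and vertical composition of natural transformations, so that $\Sigma(\C C, -)$ is itself a functor into $\Cat$.

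For the first layer, the action on objects is unproblematic: since $\eta_a \colon P_0(a) \to Q_0(a)$, the pair $(a, \eta_a(u))$ is a legitimate object of $\Sigma(\C C, Q)$ whenever $(a, u)$ is an object of $\Sigma(\C C, P)$. The main obstacle is to check that the action on morphisms is \emph{well defined}, and this is exactly the point where the naturality of $\eta$ is indispensable. Suppose $f^* \colon (a, u) \to (b, v)$, so that $f \colon a \to b$ in $\C C$ satisfies $[P_1(f)](v) = u$. To see that $f$ determines a morphism $(a, \eta_a(u)) \to (b, \eta_b(v))$ of $\Sigma(\C C, Q)$, I must show $[Q_1(f)](\eta_b(v)) = \eta_a(u)$. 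The naturality square for $\eta$ at $f$ gives $Q_1(f) \circ \eta_b = \eta_a \circ P_1(f)$, and evaluating both sides at $v$ yields $[Q_1(f)](\eta_b(v)) = \eta_a([P_1(f)](v)) = \eta_a(u)$, as required.

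Functoriality of $\Sigma(\C C, -)_1(\eta)$ then follows routinely. On morphisms the map sends $f^* \mapsto f$, now viewed as a morphism of $\Sigma(\C C, Q)$; and since composition and identities in both categories of elements are inherited verbatim from those of $\C C$ by Definition~\ref{def: GCset} (namely $g^* \circ f^* = g \circ f$ and $1_{(a,u)} = 1_a$), preservation of composition amounts to the identity $g \circ f \mapsto g \circ f$ and preservation of identities to $1_a \mapsto 1_a = 1_{(a, \eta_a(u))}$, both immediate.

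For the second layer, I would dispatch the two functor axioms for $\Sigma(\C C, -)$ by unwinding the pointwise definitions. Identities are preserved because the identity natural transformation $1_P$ has components $(1_P)_a = 1_{P_0(a)}$, whence $[\Sigma(\C C, -)_1(1_P)]_0(a, u) = (a, u)$ while the morphism part fixes each $f^*$; thus $\Sigma(\C C, -)_1(1_P) = 1_{\Sigma(\C C, P)}$. Composition is preserved because vertical composition of natural transformations is pointwise, $(\theta \circ \eta)_a = \theta_a \circ \eta_a$, so on objects $(a, (\theta \circ \eta)_a(u)) = (a, \theta_a(\eta_a(u)))$ and on morphisms both $\Sigma(\C C, -)_1(\theta \circ \eta)$ and $\Sigma(\C C, -)_1(\theta) \circ \Sigma(\C C, -)_1(\eta)$ send $f^*$ to $f$. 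In short, the only genuinely nontrivial point in the whole argument is the well-definedness step above, where naturality of $\eta$ is essential; every other verification reduces to reading off the definitions.
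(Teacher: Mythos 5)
Your proof is correct and follows essentially the same route as the paper: the one nontrivial step in both is the well-definedness of the morphism part, established by evaluating the naturality square $Q_1(f) \circ \eta_b = \eta_a \circ P_1(f)$ at $v$ to get $[Q_1(f)](\eta_b(v)) = \eta_a([P_1(f)](v)) = \eta_a(u)$. The paper records only this step and leaves the remaining functoriality checks as straightforward, which you spell out explicitly but identically in substance.
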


\begin{proof}
We only show that $[\Sigma(\C C, -)_1(\eta)]_1(f^*) \colon (a, \eta_a(u)) \to (b, \eta_b(v))$, as 
by the commutativity
of the corresponding diagram we get $[Q_1(f)](\eta_b(v)) = \eta_a\big([P_1(f)](v) = \eta_a(u)$.
\end{proof}

%


\begin{proposition}\label{prp: ac1}
If $F \in \Fun(\C C, \C D)$, then $R^F = (R^F_0, R^F_1) \colon \C C^{\op} \to \Set$, where
$$R^F_0(a) = R_0(a, F_0(a)); \ \ \ \ a \in \Ob_{\C C},$$
$$R^F_1(f \colon a \to b) \colon R_0(b, F_0(b)) \to R_0(a, F_0(a))$$
$$R^F_1(f) = R_1(f, F_1(f)),$$
as $(f, F_1(f)) \colon (a, F_0(a)) \to (b, F_0(b))$ in $\C C \times \C D$. 
\end{proposition}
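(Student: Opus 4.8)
The statement, together with its closing clause, amounts to the assertion that $R^F = (R^F_0, R^F_1)$ is a well-defined object of $\PSh(\C C)$, i.e., a contravariant functor $\C C^{\op} \to \Set$. There is no construction to perform beyond pre-composing the presheaf $R$ with the assignment $a \mapsto (a, F_0(a))$ on objects and $f \mapsto (f, F_1(f))$ on morphisms, so the whole task reduces to checking functoriality. The plan is therefore to verify, in turn, that $R^F_1$ has the asserted type, that it preserves identities, and that it reverses composition, invoking the functoriality of $F$ to turn identity and composition data in $\C C$ into identity and composition data in $\C C \times \C D$, and then the assumed functoriality of $R$.

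First I would confirm the typing of $R^F_1$. Given $f \colon a \to b$ in $\C C$, functoriality of $F$ yields $F_1(f) \colon F_0(a) \to F_0(b)$ in $\C D$, so $(f, F_1(f)) \colon (a, F_0(a)) \to (b, F_0(b))$ in $\C C \times \C D$; this is exactly the clause already recorded in the statement. Applying the contravariant functor $R$ gives $R_1(f, F_1(f)) \colon R_0(b, F_0(b)) \to R_0(a, F_0(a))$, that is, a function $R^F_0(b) \to R^F_0(a)$, which is precisely what is required of the image of a morphism under a contravariant functor.

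Next I would check the two functor laws. For identities, I compute $R^F_1(1_a) = R_1(1_a, F_1(1_a)) = R_1(1_a, 1_{F_0(a)}) = R_1(1_{(a, F_0(a))}) = \id_{R^F_0(a)}$, using $F_1(1_a) = 1_{F_0(a)}$ and that $R$ sends identities to identities. For composition, given composable $f \colon a \to b$ and $g \colon b \to c$ in $\C C$, functoriality of $F$ gives $F_1(g \circ f) = F_1(g) \circ F_1(f)$, hence $(g \circ f, F_1(g \circ f)) = (g, F_1(g)) \circ (f, F_1(f))$ in $\C C \times \C D$; since $R$ reverses composition, $R^F_1(g \circ f) = R_1(f, F_1(f)) \circ R_1(g, F_1(g)) = R^F_1(f) \circ R^F_1(g)$, which is the contravariance demanded of $R^F$.

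There is essentially no obstacle here: the only point to keep in view is that both functor laws for $F$ are exactly what is needed to lift identities and composites from $\C C$ into genuine identities and composites in $\C C \times \C D$, after which the hypothesis that $R$ is a presheaf supplies the rest. This is why the verification is diagram-free and routine, in keeping with the authors' convention of omitting such straightforward facts.
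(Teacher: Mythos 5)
Your proof is correct: the typing check, identity preservation, and reversal of composition are exactly the routine verifications needed, and the observation that $R^F$ is just $R$ precomposed with $a \mapsto (a, F_0(a))$, $f \mapsto (f, F_1(f))$ is the right way to see it. The paper omits the proof entirely, counting this among the ``facts that are straightforward to show,'' so your argument is precisely the intended one.
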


\begin{definition}\label{def: productset}
If $P \colon \C C^{\op} \to \Set$ is a $\Set$-valued presheaf on $\C C$, the \textit{product set} 
$\prod_{a \in \Ob_{\C C}}P_0(a)$ of $P$ has elements families $\Phi = (\Phi_a)_{a \in \Ob_{\C C}}$, where 
$\Phi_a \in P_0(a)$, for every $a \in \Ob_{\C C}$, such that 
$$\forall_{a, b \in \Ob_{\C C}}\forall_{f \in \Mor^{\C C}(a, b)}\big([P_1(f)](\Phi_b) = \Phi_a\big).$$
\end{definition}

\begin{proposition}\label{prp: ac2}
We have that  $\Pi R = ((\Pi R)_0, (\Pi R)_1) \colon \Fun(\C C, \C D)^{\op} \to \Set$, where
$$\big(\Pi R\big)_0(F) = \prod_{a \in \Ob_{\C C}}R^F_0(a) = \prod_{a \in \Ob_{\C C}}R_0(a, F_0(a)); \ \ \ \ F \in
\Fun(\C C, \C D),$$
$$\big(\Pi R\big)_1(\eta \colon F \To G) \colon \prod_{a \in \Ob_{\C C}}R_0(a, G_0(a)) \to
\prod_{a \in \Ob_{\C C}}R_0(a, F_0(a))$$
$$\bigg[\big[\big(\Pi R\big)_1(\eta)\big](\Phi)\bigg]_a = [R_1(1_a, \eta_a)](\Phi_a); \ \ \ \ \Phi \in 
\prod_{a \in \Ob_{\C C}}R_0(a, G_0(a)), \ a \in \Ob_{\C C}.$$

\end{proposition}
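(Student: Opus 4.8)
The plan is to verify that $\Pi R$ satisfies the defining conditions of a ($\Set$-valued) presheaf on $\Fun(\C C, \C D)$; that is, that $(\Pi R)_1$ is well defined on morphisms and respects identities and composition contravariantly. The object clause needs almost no work: for each $F \in \Fun(\C C, \C D)$, Proposition~\ref{prp: ac1} guarantees that $R^F$ is a genuine $\Set$-valued presheaf on $\C C$, so that $(\Pi R)_0(F) = \prod_{a \in \Ob_{\C C}} R^F_0(a)$ is a well-defined set by Definition~\ref{def: productset}.

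The heart of the argument, and the step I expect to be the main obstacle, is to show that $(\Pi R)_1(\eta)$ actually takes values in $(\Pi R)_0(F)$. Fixing $\eta \colon F \To G$ and $\Phi \in (\Pi R)_0(G)$, I would set $\Psi_a = [R_1(1_a, \eta_a)](\Phi_a)$ and check that the family $\Psi = (\Psi_a)_{a \in \Ob_{\C C}}$ satisfies the compatibility condition of Definition~\ref{def: productset} for the presheaf $R^F$, namely $[R_1(f, F_1(f))](\Psi_b) = \Psi_a$ for every $f \colon a \to b$ in $\C C$. Expanding the left-hand side and using the contravariant functoriality of $R$ on the composite $(1_b, \eta_b) \circ (f, F_1(f)) = (f, \eta_b \circ F_1(f))$ in $\C C \times \C D$, I would rewrite it as $[R_1(f, \eta_b \circ F_1(f))](\Phi_b)$. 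For the right-hand side, since $\Phi \in (\Pi R)_0(G)$ satisfies $[R_1(f, G_1(f))](\Phi_b) = \Phi_a$, a second application of the functoriality of $R$ --- this time on $(f, G_1(f)) \circ (1_a, \eta_a) = (f, G_1(f) \circ \eta_a)$ --- rewrites $\Psi_a$ as $[R_1(f, G_1(f) \circ \eta_a)](\Phi_b)$. The two expressions now coincide precisely because the naturality of $\eta$ gives $\eta_b \circ F_1(f) = G_1(f) \circ \eta_a$; this is exactly where naturality enters, and it is the key to the whole construction.

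It remains to confirm the two functoriality axioms, both of which reduce to the functoriality of $R$. For the identity transformation $1_F$ one has $(1_F)_a = 1_{F_0(a)}$, so $R_1(1_a, 1_{F_0(a)}) = R_1(1_{(a, F_0(a))})$ is the identity on $R_0(a, F_0(a))$, whence $(\Pi R)_1(1_F) = \id_{(\Pi R)_0(F)}$. For composition, given $\eta \colon F \To G$ and $\theta \colon G \To H$, vertical composition is computed componentwise, $(\theta \circ \eta)_a = \theta_a \circ \eta_a$, and factoring $(1_a, \theta_a \circ \eta_a) = (1_a, \theta_a) \circ (1_a, \eta_a)$ together with the contravariance of $R_1$ yields $(\Pi R)_1(\theta \circ \eta) = (\Pi R)_1(\eta) \circ (\Pi R)_1(\theta)$, as required of a presheaf. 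These last verifications are routine once the well-definedness above is in place.
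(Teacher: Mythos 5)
Your proposal is correct and follows essentially the same route as the paper: the crucial step is the well-definedness of $(\Pi R)_1(\eta)$, established by applying the contravariant functoriality of $R$ to the composites $(1_b, \eta_b) \circ (f, F_1(f))$ and $(f, G_1(f)) \circ (1_a, \eta_a)$ in $\C C \times \C D$ and then invoking the naturality square $\eta_b \circ F_1(f) = G_1(f) \circ \eta_a$, exactly as in the paper's chain of equalities (you merely meet in the middle rather than compute one side into the other). The identity and composition axioms, which the paper dismisses as straightforward, are verified correctly in your last paragraph, including the reversal $(\Pi R)_1(\theta \circ \eta) = (\Pi R)_1(\eta) \circ (\Pi R)_1(\theta)$ required of a contravariant functor.
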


\begin{proof}
We show that $\big(\Pi R\big)_1(\eta)$ is well-defined i.e., $\big(\Pi R\big)_1(\eta)\big](\Phi) \in 
\prod_{a \in \Ob_{\C C}}R_0(a, F_0(a))$, if $\Phi \in \prod_{a \in \Ob_{\C C}}R_0(a, G_0(a))$. If 
$f \colon a \to b$ in $\C C$, then by the definition of $R^F$ in Proposition~\ref{prp: ac1}, and by
Definition~\ref{def: productset} we have that
$$[R^G_1(f)](\Phi_b) = [R_1(f, G_1(f))](\Phi_b) = \Phi_a.$$
We need to show that 
$$[R^F_1(f)]\bigg(\bigg[\big(\Pi R\big)_1(\eta)\big](\Phi)\bigg]_b\bigg) = 
\bigg[\big(\Pi R\big)_1(\eta)\big](\Phi)\bigg]_a \ \ \ \ \ \mbox{i.e.,}$$
$$[R_1(f, F_1(f))]\bigg([R_1(1_b, \eta_b)](\Phi_b)\bigg) = [R_1(1_a, \eta_a)](\Phi_a).$$
Since $\eta$ is a natural transformation we have that
\begin{align*}
\ \ \ \ \ \ \ \ [R_1(f, F_1(f))]\bigg([R_1(1_b, \eta_b)](\Phi_b)\bigg) & = [R_1(f, F_1(f))] \circ R_1(1_b, 
\eta_b)](\Phi_b)\\
& = \big[R_1\big((1_b, \eta_b) \circ (f, F_1(f))\big)\big](\Phi_b)\\
& = \big[R_1\big((f, \eta_b \circ F_1(f)\big)\big](\Phi_b)\\
& = \big[R_1\big((f, G_1(f) \circ \eta_a \big)\big](\Phi_b)\\
& = \big[R_1\big((f \circ 1_a, G_1(f) \circ \eta_a \big)\big](\Phi_b)\\
& = \big[R_1\big((f, G_1f)) \circ (1_a, \eta_a)\big)\big](\Phi_b)\\
& = [R_1(1_a, \eta_a)]\bigg(\big[R_1(f, G_1(f))\big](\Phi_b)\bigg)\\
& = [R_1(1_a, \eta_a)](\Phi_a).
\end{align*}
The rest of the proof is straightforward.
\end{proof}

Next we describe the second projection associated to the Grothendieck construction similarly to
the definition 
of the second projection associated to the $\mathsmaller{\sum}$-type.

\begin{proposition}\label{prp: pr2}
Let $P \colon \C C^{\op} \to \Set$.\\[1mm]
(i) $P^{\Sigma} = (P^{\Sigma}_0, P^{\Sigma}_1) \colon \Sigma(\C C, P)^{\op} \to Set$, where 
$P^{\Sigma}_0(a, u) = P_0(a)$, for every
$(a, u) \in \Sigma(\C C, P)$, and $P^{\Sigma}_1(f^* \colon (a, u) \to (b, v)) = P_1(f) \colon P_0(b)
\to P(a)$,
for every morphism $f^* \colon (a, u) \to (b, v)$.\\[1mm]
(ii) The family $\pr_2^P = (\pr_2^P(a, u))_{(a, u) \in \Ob_{\Sigma(\C C, P}}$, where $\pr_2^P(a, u) = u$, 
for every
$(a, u) \in \Ob_{\Sigma(\C C, P)}$, belongs to the product set
$$\prod_{(a, u) \in \Ob_{\Sigma(\C C, P)}}P^{\Sigma}_0(a, u) = \prod_{(a, u) \in \sum_{a \in 
\Ob_{\C D}}P_0(a)}P_0(a).$$

\end{proposition}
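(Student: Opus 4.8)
The plan is to treat the two parts separately. Part (i) is cleanest if we recognise that $P^{\Sigma}$ is nothing but the composite $P \circ (\pr_1^P)^{\op}$. Recall from the discussion after Definition~\ref{def: GCset} that $\pr_1^P \colon \Sigma(\C C, P) \to \C C$ is a functor with $(\pr_1^P)_0(a, u) = a$ and $(\pr_1^P)_1(f^*) = f$; hence $(\pr_1^P)^{\op} \colon \Sigma(\C C, P)^{\op} \to \C C^{\op}$ is a functor. On objects $P_0\big((\pr_1^P)_0(a, u)\big) = P_0(a) = P^{\Sigma}_0(a, u)$, and on morphisms $P_1\big((\pr_1^P)_1(f^*)\big) = P_1(f) = P^{\Sigma}_1(f^*)$, so that $P^{\Sigma} = P \circ (\pr_1^P)^{\op}$. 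Since $P \colon \C C^{\op} \to \Set$ is a presheaf and a composite of functors is a functor, $P^{\Sigma}$ is a presheaf on $\Sigma(\C C, P)$. If one prefers a direct verification of the functor laws, the points to check are $P^{\Sigma}_1(1_{(a,u)}) = P_1(1_a) = 1_{P_0(a)}$ (using $1_{(a,u)} = 1_a$ and the identity law for $P$), and, for $f^* \colon (a, u) \to (b, v)$ and $g^* \colon (b, v) \to (c, w)$, the equalities $P^{\Sigma}_1(g^* \circ f^*) = P_1(g \circ f) = P_1(f) \circ P_1(g) = P^{\Sigma}_1(f^*) \circ P^{\Sigma}_1(g^*)$, using $g^* \circ f^* = (g \circ f)^*$ and the contravariant functoriality of $P$. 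Note that $P^{\Sigma}_1(f^*) = P_1(f)$ depends only on the underlying morphism $f$ and carries the correct type $P_0(b) = P^{\Sigma}_0(b,v) \to P^{\Sigma}_0(a,u) = P_0(a)$, so nothing further is needed.

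For part (ii), by Definition~\ref{def: productset} applied to the presheaf $P^{\Sigma}$ on the category $\Sigma(\C C, P)$, the family $\pr_2^P = (\pr_2^P(a,u))_{(a,u)}$, with $\pr_2^P(a,u) = u \in P_0(a) = P^{\Sigma}_0(a,u)$, lies in the product set exactly when, for every morphism $f^* \colon (a, u) \to (b, v)$ of $\Sigma(\C C, P)$, we have $[P^{\Sigma}_1(f^*)](\pr_2^P(b,v)) = \pr_2^P(a,u)$, that is, $[P_1(f)](v) = u$. But this is precisely the defining condition for $f^*$ to be a morphism $(a,u) \to (b,v)$ in the category of elements, as stipulated in Definition~\ref{def: GCset}. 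Hence the compatibility requirement holds automatically, and $\pr_2^P$ belongs to the stated product set.

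I expect no genuine obstacle here, since both parts unwind to definitions. The one point that requires care is the variance bookkeeping: one must keep track that $P^{\Sigma}_1$ reverses the direction of morphisms, so that the compatibility condition of Definition~\ref{def: productset} reads as $[P_1(f)](v) = u$ rather than its transpose, and then observe that this condition coincides verbatim with the morphism condition of $\Sigma(\C C, P)$. This coincidence is exactly the categorical version of the type-theoretic fact that the second projection $\pr_2 \colon \prod_{w : \sum_{x}P(x)} P(\pr_1 w)$ is a well-typed dependent function, which is the analogy the proposition is meant to make precise.
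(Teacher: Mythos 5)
Your proof is correct and follows essentially the same route as the paper: part (ii) is verified by exactly the paper's observation that the compatibility condition of Definition~\ref{def: productset} for $P^{\Sigma}$ is verbatim the defining condition $[P_1(f)](v) = u$ of a morphism in $\Sigma(\C C, P)$. Your identification $P^{\Sigma} = P \circ (\pr_1^P)^{\op}$ in part (i) is a tidy way of packaging what the paper dismisses as immediate, but it is a presentational refinement rather than a different argument.
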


\begin{proof}
The proof of (i) is immediate, while for (ii) it suffices to show the condition of 
Definition~\ref{def: productset}. 
Let $f^* \colon (a, u) \to (b, v)$ in $\Sigma(\C C, P)$ i.e., $f \colon a \to b$ such that 
$[P_1(f)](v) = u$. Thus
$\big[P_1^{\Sigma}(f^*)\big]\big(\pr_2^P(b, v)\big) = [P_1(f)](v) = u = \pr_2^P(a, u)$.
\end{proof}

\section{The distributivity of $\Pi$ over $\Sigma$ and the Grothendieck construction}
\label{sec: ac}

%
%

We shall use calligraphic letters for $\Cat$-valued presheaves.
For a covariant version of the following construction see~\cite{BW12}, pp.~337-338. 

\begin{definition}[The Grothendieck construction]\label{def: GCcat}
If $\C P = (\C P_0, \C P_1) \colon \C C^{\op} \to \Cat$,
the category $\Sigma(\C C, \C P)$
has objects pairs $(a, x)$, where $a \in \Ob_{\C C}$ and $x \in \Ob_{\C P_0(a)}$.
We denote $\Ob_{\Sigma(\C C, \C P)}$, the disjoint union of the sets $\Ob_{\C P_0(a)}$, 
where $a \in \Ob_{\C C}$, by 
$$\sum_{a \in \Ob_{\C C}}\Ob_{\C P_0(a)}.$$
A morphism from $(a, x)$ to
$(b, y)$ is a pair $(f, \phi)$, where $f \colon a \to b$ in $\C C$ and $\phi \colon x \to 
[\C P_1(f)]_0(y)$ in $\C P_0(a)$.
If $(g, \theta) \colon (b, y) \to (c, z)$, the composition
$(g, \theta) \circ (f, \phi) \colon (a, x) \to (c, z)$ is the pair $\big(g \circ f, 
[\C P_1(f)]_1(\theta) \circ \phi\big)$, 
where 
$[\C P_1(f)]_1(\theta) \circ \phi \colon x \to [\C P_1(g \circ f)]_0(z) = 
[\C P_1(f)]_0\big([\C P_1(g)]_0(z)\big)$. 
Finally,  $1_{(a, x)} = (1_a, 1_x)$. 
\end{definition}

If we consider a set as a discrete category, then Definition~\ref{def: GCcat} is a generalisation of 
Definition~\ref{def: GCset}. 
Let $\C P^{\C D} \colon \C C^{\op} \to \Cat$ the constant presheaf $\C D$ on $\C C$ i.e., 
$\C P^{\C D}_0(a) = \C D$, 
for every $a \in \Ob_{\C C}$ and $\C P^{\C D}_1(f \colon a \to b) = 1_{\C D}$, for every
$f \in \Mor^{\C C}(a, b)$. 
It is immediate to see that 
$$\Sigma(\C C, \C P^{\C D}) = \C C \times \C D.$$
This is the translation of the type-theoretic equality 
$$\sum_{x : A}B \equiv A \times B.$$
Clearly, $\pr_1^{\C P} \colon \Sigma(\C C, \C P) \to \C C$, where 
$\pr_1^{\C P} = \big((\pr_1^{\C P})_0, (\pr_1^{\C P})_1\big)$ with $(\pr_1^{\C P})_0(a, x) = a$ 
and $(\pr_1^{\C P})_1\big((f, \phi) \colon (a, x) \to (b, y)\big) = f$, is a functor. Actually, 
$\pr_1^{\C P}$ is
a split fibration. Next we translate accordingly the type-theoretic equivalence
$$\sum_{x : A}\sum_{y : B} R(x, y) \simeq \sum_{y : B}\sum_{x : A} R(x, y).$$

\begin{proposition}\label{prp: presheafofcats}
Let $a \in \Ob_{\C C}$.\\[1mm]
(i) $R^a = (R^a_0, R^a_1) \colon \C D^{\op} \to \Set$, where $R^a_0(x) = R_0(a, x)$, 
for every $x \in \Ob_{\C D}$, and $R^a_1(\phi \colon x \to y) = R_1(1_a, \phi) \colon R_0(a, y) 
\to R_0(a, x)$,
for every $\phi \colon x \to y$ in $\C D$.\\[1mm]
(ii) $^xR = (^xR_0, ^xR_1) \colon \C C^{\op} \to \Set$, where $^xR_0(a) = R_0(a, x)$, for every 
$a \in \Ob_{\C C}$, 
and $^xR_1(f \colon a \to b) = R_1(f, 1_x) \colon R_0(b, x) \to R_0(a, x)$, for every $f \colon 
a \to b$ in $\C C$.\\[1mm]
(iii) $\Sigma^{\C D, R} = (\Sigma^{\C D, R}_0, \Sigma^{\C D, R}_1) \colon \C C^{\op} \to \Cat$, where 
$$\Sigma^{\C D, R}_0(a) = \Sigma(\C D, R^a); \ \ \ \ a \in \Ob_{\C C},$$
$$\Sigma^{\C D, R}_1(f) \colon \Sigma(\C D, R^b) \to \Sigma(\C D, R^a); \ \ \ \ f \in 
\Mor^{\C C}(a, b),$$
$$\big[\Sigma^{\C D, R}_1(f)\big]_0 \colon \sum_{x \in \Ob_{\C D}}R_0(b, x) \to 
\sum_{x \in \Ob_{\C D}}R_0(a, x) $$
$$\big[\Sigma^{\C D, R}_1(f)\big]_0(x, u) = \big(x, [R_1(f, 1_x)](u)\big),$$
$$\big[\Sigma^{\C D, R}_1(f)\big]_1\big(\phi^* \colon (x, u) \to (y, v)\big) \colon
\big(x, [R_1(f, 1_x)](u)\big)
\to \big(y, [R_1(f, 1_y)](v)\big),$$
$$\big[\Sigma^{\C D, R}_1(f)\big]_1\big(\phi^*\big) = \phi.$$
(iv) $\Sigma^{\C C, R} = (\Sigma^{\C C, R}_0, \Sigma^{\C C, R}_1) \colon \C D^{\op} \to \Cat$, where 
$$\Sigma^{\C C, R}_0(x) = \Sigma(\C C, ^xR); \ \ \ \ x \in \Ob_{\C D},$$
$$\Sigma^{\C C, R}_1(\phi) \colon \Sigma(\C C, ^yR) \to \Sigma(\C C, ^xR); \ \ \ \ \phi \in 
\Mor^{\C D}(x, y),$$
$$\big[\Sigma^{\C C, R}_1(\phi)\big]_0 \colon \sum_{x \in \Ob_{\C C}}R_0(a, y) \to 
\sum_{x \in \Ob_{\C C}}R_0(a, x) $$
$$\big[\Sigma^{\C C, R}_1(\phi)\big]_0(a, u) = \big(a, [R_1(1_a, \phi)](u)\big),$$
$$\big[\Sigma^{\C C, R}_1(\phi)\big]_1\big(f^* \colon (a, u) \to (b, v)\big) \colon
\big(a, [R_1(1_a, \phi)](u)\big)
\to \big(b, [R_1(1_b, \phi)](v)\big),$$
$$\big[\Sigma^{\C C, R}_1(\phi)\big]_1\big(f^*\big) = f.$$
(v) The categories $\Sigma(\C C, \Sigma^{\C D, R})$ and $\Sigma(\C D, \Sigma^{\C C, R})$ are isomorphic.
\end{proposition}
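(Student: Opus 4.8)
The plan is to exhibit an explicit isomorphism of categories $\Theta \colon \Sigma(\C C, \Sigma^{\C D, R}) \to \Sigma(\C D, \Sigma^{\C C, R})$ together with its inverse, both acting by ``swapping'' the $\C C$- and $\C D$-data while keeping the element of $R_0$ fixed. First I would unwind the objects. By Definition~\ref{def: GCcat} an object of $\Sigma(\C C, \Sigma^{\C D, R})$ is a pair $(a, \xi)$ with $a \in \Ob_{\C C}$ and $\xi \in \Ob_{\Sigma(\C D, R^a)}$, and by Definition~\ref{def: GCset} such a $\xi$ is itself a pair $(x, u)$ with $x \in \Ob_{\C D}$ and $u \in R^a_0(x) = R_0(a, x)$. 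Symmetrically, an object of $\Sigma(\C D, \Sigma^{\C C, R})$ unwinds to a pair $(x, (a, u))$ with $u \in {}^xR_0(a) = R_0(a, x)$. Both categories thus have as objects exactly the triples consisting of $a \in \Ob_{\C C}$, $x \in \Ob_{\C D}$ and $u \in R_0(a, x)$, so I set $\Theta_0(a, (x, u)) = (x, (a, u))$, which is visibly a bijection on objects with inverse the reverse swap.

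The main step is to identify the morphisms of both categories as the same data. I would compute that a morphism $(a, (x, u)) \to (a', (x', u'))$ in $\Sigma(\C C, \Sigma^{\C D, R})$ is, by Definition~\ref{def: GCcat}, a pair $(f, \phi)$ with $f \colon a \to a'$ in $\C C$ and $\phi$ a morphism $(x, u) \to [\Sigma^{\C D, R}_1(f)]_0(x', u') = (x', [R_1(f, 1_{x'})](u'))$ in $\Sigma(\C D, R^a)$; by Definition~\ref{def: GCset} the latter is a morphism $\phi \colon x \to x'$ in $\C D$ subject to $[R_1(1_a, \phi)]\big([R_1(f, 1_{x'})](u')\big) = u$. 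Using that $(f, \phi) = (f, 1_{x'}) \circ (1_a, \phi)$ in $\C C \times \C D$ and the contravariance of $R$, the composite $R_1(1_a, \phi) \circ R_1(f, 1_{x'})$ equals $R_1(f, \phi)$, so the condition collapses to $[R_1(f, \phi)](u') = u$. Running the analogous computation on the other side, now factoring $(f, \phi) = (1_{a'}, \phi) \circ (f, 1_x)$, shows that a morphism $(x, (a, u)) \to (x', (a', u'))$ in $\Sigma(\C D, \Sigma^{\C C, R})$ is a pair $(\phi, f)$ with $\phi \colon x \to x'$, $f \colon a \to a'$ and exactly the same constraint $[R_1(f, \phi)](u') = u$. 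Hence $\Theta_1(f, \phi) = (\phi, f)$ is a well-defined bijection on hom-sets.

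It then remains to verify that $\Theta$ is functorial. For identities this is immediate, since $1_{(a, (x, u))} = (1_a, 1_{(x, u)})$ has underlying pair $(1_a, 1_x)$, which $\Theta$ sends to $(1_x, 1_a) = 1_{(x, (a, u))}$. For composition I would trace through Definition~\ref{def: GCcat}: the composite of $(f, \phi)$ and $(f', \phi')$ in $\Sigma(\C C, \Sigma^{\C D, R})$ has first component $f' \circ f$ and second component $[\Sigma^{\C D, R}_1(f)]_1(\phi') \circ \phi$, and since $[\Sigma^{\C D, R}_1(f)]_1$ merely returns the underlying $\C D$-arrow (Proposition~\ref{prp: presheafofcats}(iii)), this composite is the pair $(f' \circ f, \phi' \circ \phi)$. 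The same calculation in $\Sigma(\C D, \Sigma^{\C C, R})$ gives $(\phi' \circ \phi, f' \circ f)$, so $\Theta$ preserves composition. Finally the reverse swaps define $\Theta^{-1}$, and $\Theta^{-1} \circ \Theta = \id$ and $\Theta \circ \Theta^{-1} = \id$ hold by construction.

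The step I expect to be the real obstacle is the morphism comparison in the second paragraph: one must be scrupulous about the variance of $R$ and about the order of composition in the product category $\C C \times \C D$, since the whole isomorphism hinges on the fact that the two factorizations $(f, 1_{x'}) \circ (1_a, \phi)$ and $(1_{a'}, \phi) \circ (f, 1_x)$ of the product-arrow $(f, \phi)$ both collapse, under the contravariant $R_1$, to the single map $R_1(f, \phi)$. This is the bifunctoriality of $R$ on $\C C \times \C D$, and once it is pinned down the functoriality checks are routine.
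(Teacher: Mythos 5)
Your proof of (v) is correct, but the distribution of labor is essentially opposite to the paper's. The paper spends nearly all of its effort on (iii): it verifies that $\big[\Sigma^{\C D, R}_1(f)\big]_1$ is well defined, i.e.\ that $\phi^* \colon (x,u) \to (y,v)$ in $\Sigma(\C D, R^b)$ forces $\phi$ to be a morphism $\big(x, [R_1(f,1_x)](u)\big) \to \big(y, [R_1(f,1_y)](v)\big)$ in $\Sigma(\C D, R^a)$, via the identity $R_1(1_a,\phi) \circ R_1(f,1_y) = R_1(f,\phi) = R_1(f,1_x) \circ R_1(1_b,\phi)$; it then declares (iv) ``similar'' and (v) ``straightforward''. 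You do the reverse: you take (i)--(iv) as given and make (v) explicit, unwinding both categories into the same data --- objects are triples $(a, x, u)$ with $u \in R_0(a,x)$, morphisms are pairs $(f,\phi)$ subject to $[R_1(f,\phi)](u') = u$ --- so that the coordinate swap is visibly an isomorphism. Note that the crux of your morphism identification, namely that the two factorizations $(f,\phi) = (f,1_{x'}) \circ (1_a,\phi) = (1_{a'},\phi) \circ (f,1_x)$ collapse under the contravariant $R_1$ to the single map $R_1(f,\phi)$, is exactly the bifunctoriality identity powering the paper's proof of (iii); the mathematical core coincides, only packaged differently. Your packaging arguably buys more, since it exposes the symmetry that makes (v) true instead of leaving it as an unargued ``straightforward'' claim, and your functoriality checks (identities and composition both reduce to componentwise composition because $\big[\Sigma^{\C D, R}_1(f)\big]_1$ and $\big[\Sigma^{\C C, R}_1(\phi)\big]_1$ merely return underlying arrows) are sound.

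One caveat: the statement comprises (i)--(v), and your unwinding presupposes that $\Sigma^{\C D, R}$ and $\Sigma^{\C C, R}$ are genuine $\Cat$-valued presheaves --- otherwise the two Grothendieck constructions in (v) are not even defined. A complete write-up should therefore include the verifications of (i)--(iv): that $\Sigma^{\C D, R}_1(f)$ is a well-defined functor (its action on morphisms is legitimate precisely because of the collapse identity you already proved), and that $\Sigma^{\C D, R}_1(1_a)$ is the identity and $\Sigma^{\C D, R}_1(g \circ f) = \Sigma^{\C D, R}_1(f) \circ \Sigma^{\C D, R}_1(g)$. Since your computation supplies the only non-formal step in those checks, this is an omission of bookkeeping rather than of an idea.
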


\begin{proof}
The proofs of (i) and (ii) are immediate. For (iii) we only show that 
$\big[\Sigma^{\C D, R}_1(f)\big]_1$ is well-defined. 
If $\phi^* \colon (x, u) \to (y, v)$ in $\Sigma(\C D, R^b)$ i.e., $\phi \colon x \to y$ 
such that $[R_1(1_b, \phi)](v) = u$,
we show that $\phi \colon \big(x, [R_1(f, 1_x)](u)\big) \to \big(y, [R_1(f, 1_y)](v)\big)$ 
in $\Sigma(\C D, R^a)$, as
\begin{align*}
\ \ \ \ \ \ \ \ \ \ \ \ \  [R_1(1_a, \phi)]\big([R_1(f, 1_y)](v)\big) & = [R_1(1_a, \phi) 
\circ R_1(f, 1_y)](v)\\
& = \big[R_1\big((f, 1_y) \circ (1_a, \phi)\big)\big](v)\\
& = [R_1(f \circ 1_a, 1_y \circ f)](v)\\
& = [R_1(f, \phi)](v)\\
& = [R_1(1_b \circ f, \phi \circ 1_x)](v)\\
& = \big[R_1\big((1_b, \phi) \circ (f, 1_x)\big)\big](v)\\
& = [R_1(f, 1_x)]\big([R_1(1_b, \phi)](v)\big)\\
& = [R_1(f, 1_x)](u).
\end{align*}
The proof of (iv) is similar to the proof of (iii), and the proof of 
(v) is straightforward.
%
\end{proof}

Next we extend Definition~\ref{def: productset} to $\Cat$-valued presheaves.

\begin{definition}\label{def: Productset}
If $\C P \colon \C C^{\op} \to \Cat$, the \textit{product set} 
$\prod_{a \in \Ob_{\C C}}\C P_0(a)$ of $\C P$ has elements families $\Phi = (\Phi_a)_{a \in \Ob_{\C C}}$,
where
$\Phi_a \in \Ob_{\C P_0(a)}$, for every $a \in \Ob_{\C C}$, such that 
for every $a, b \in \Ob_{\C C}$ and for every $f \in \Mor^{\C C}(a, b)$ there is a morphism 
$\phi \colon \Phi_a 
\to [\C P_1(f)]_0(\Phi_b)$ in $\C P_0(a)$.
\end{definition}

If $\Phi$ is in the product set of the presheaf $\Sigma^{\C D, R} \colon \C C^{\op} \to \Cat$ i.e., 
$$\Phi \in \prod_{a \in \Ob_{\C C}}\sum_{x \in \Ob_{\C D}}R_0(a, x),$$
and if $\Phi_a = (x, u)$ with $x \in \Ob_{\C D}$ and $u \in R_0(a, x)$ and if $\Phi_b = (y, v)$ 
with $y \in \Ob_{\C D}$
and $v \in R_0(b, y)$, then if $f \colon a \to b$ in $\C C$, we have that
$$\big[\Sigma^{\C D, R}_1(f)\big]_0(y, v) = \big(y, [R_1(f, 1_y)](v)\big),$$
If $\phi^* \colon (x, u) \to \big(y, [R_1(f, 1_y)](v)\big)$ in $\Sigma(\C D, R^a)$, there is
$\phi \colon x \to y$ in
$\C D$ such that
\begin{align*}
\ \ \ \ \ \ \ \ \ \ \ \ \ \ \ \ \ \ \ \ \ \ \ \ \ \ \ \ \ \ \ \ \ u & = [R_1(1_a, 
\phi)]\big([R_1(f, 1_y)](v)\big)\\
& = [R_1(1_a, \phi) \circ R_1(f, 1_y)](v)\\
& = \big[R_1 \big((f, 1_y) \circ (1_a, \phi)\big)\big](v)\\
& = [R_1(f \circ 1_a, 1_y \circ \phi)](v)\\
& = [R_1(f, \phi)](v).
\end{align*}

Since $R^a \colon \C D^{\op} \to \Set$, we have that 
$$\pr_2^{R^a} \in \prod_{(x,u) \in \sum_{x \in \Ob_{\C D}}R_0(a, x)}R_0(a, x),$$
with $\pr_2^{R^a}(x, u) = u$. The above equality $u = [R_1(f, \phi)](v)$ is thus written as
$$\pr_2^{R^a}(\Phi_a) = [R_1(f, \phi)]\big(\pr_2^{R^b}(\Phi_b)\big).$$

Next we add a form of ``realiser'' for the defining condition of the product set of $\Sigma^{\C D,R}$.
In the case of the type-theoretic axiom of choice, the corresponding function from $X$ to $Y$ 
is definable. This does not seem possible for the categories used here.

\begin{definition}\label{def: associate}
If $\Phi$ is in the product set of $\Sigma^{\C D,R}$, an \textit{associate} for $\Phi$ 
is a functor
$F^{\Phi} \colon \C C \to \C D$, such that for every $a, b \in \Ob_{\C C}$ and for every
morphism $f \colon a \to b$ in 
$\C C$ we have that
$$\big(F^{\Phi}\big)_0(a) = \pr_1^{R^a}(\Phi_a),$$
$$\big(F^{\Phi}\big)_1(f) \colon \Phi_a \to \big[\Sigma^{\C D, R}_1(f)\big]_0(\Phi_b)$$
i.e., the morphism $\big(F^{\Phi}\big)_1(f) \colon \pr_1^{R^a}(\Phi_a) \to \pr_1^{R^b}(\Phi_b)$ 
satisfies the defining
condition of a morphism $\Phi_a \to \big[\Sigma^{\C D, R}_1(f)\big]_0(\Phi_b)$ in $\Sigma(\C D, R^a)$.
\end{definition}

\begin{definition}\label{def: newcat}
The \textit{product category} $\Pi (\C C, \Sigma^{\C D, R})$ of $\C C, \C D$ with respect
to $R$ has objects pairs 
$(\Phi, F^{\Phi})$, where $\Phi = (\Phi_a)_{a \in \Ob_{\C C}}$ is in the product set of the $\Cat$-valued
presheaf 
$\Sigma^{\C D,R}$ and $F^{\Phi}$ is an associate for $\Phi$. We denote the objects of this category by 
$$\bigg[\prod_{a \in \Ob_{\C C}}\sum_{x \in \Ob_{\C D}}R_0(a, x)\bigg]^*.$$
A morphism from $(\Phi, F^{\Phi})$ to $(\Psi, F^{\Psi})$ 
in $\Pi (\C C, \Sigma^{\C D, R})$ is a natural transformation $\eta \colon F^{\Phi} \To F^{\Psi}$ 
such that the following
compatibility condition between $\eta$ and $R$ is satisfied:
$$\forall_{a \in \Ob_{\C C}}\bigg([R_1(1_a, \eta_a)]\big(\pr_2^{R^a}(\Psi_a)\big) = 
\pr_2^{R^a}(\Phi_a)\bigg).$$
Moreover, $1_{(\Phi, F^{\Phi})} = 1_{F^{\Phi}}$, where $\big(1_{F^{\Phi}}\big)_a = 
1_{\pr_1^{R^a}(\Phi_a)}$, for every
$a \in \Ob_{\C C}$. The composition of morphisms in $\Pi (\C C, \Sigma^{\C D, R})$ is the 
composition of the corresponding
natural transformations.
\end{definition}

Notice that as $(1_a, \eta_a) \colon (a, F_0^{\Phi}(a)) \to (a, F_0^{\Psi}(a))$, we get $R_1(1_a, \eta_a)
\colon R_0(a, F_0^{\Psi}(a)) \to R_0(a, F_0^{\Phi}(a))$. Let $\Phi_a = (x, u)$, where
$x = F_0^{\Phi}(a) \in \Ob_{\C D}$
and $u \in R_0(a, x)$, 
$\Phi_b = (y, v)$, where $y = F_0^{\Phi}(b) \in \Ob_{\C D}$ and $v \in R_0(b, y)$, $\Psi_a = 
(x{'}, u{'})$, 
where $x{'} = F_0^{\Psi}(a) \in \Ob_{\C D}$ and $u{'} \in R_0(a, x{'})$, and $\Psi_b = (y{'}, v{'})$, 
where 
$y{'} = F_0^{\Psi}(b) \in \Ob_{\C D}$ and $v{'} \in R_0(b, y{'})$. Then the compatibility condition between 
$\eta$ and $R$ takes the form $[R_1(1_a, \eta_a)](u{'}) = u$. The identity morphism $1_{(\Phi, F^{\Phi})}$
is
well-defined, as $R_1(1_a, 1_x) = \id_{R_0(a, x)}$, hence $[R_1(1_a, 1_x)](v) = v$. The composition
of morphisms 
in $\Pi (\C C, \Sigma^{\C D, R})$ is also well-defined; if $\eta \colon F^{\Phi} \To F^X$ and 
$\theta \colon F^X \To F^{\Psi}$,
then
\begin{align*}
\ \ \ \ \ \ \ \  \ \ R_1(1_a, \theta_a \circ \eta_a)]\big(\pr_2^{R^a}(\Psi_a)\big) & = 
[R_1(1_a, \eta_a)]\big([R_1(1_a,
\theta_a)](\pr_2^{R^a}(\Psi_a)\big)\\
& = [R_1(1_a, \eta_a)]\big(\pr_2^{R^a}(X_a)\\
& = \pr_2^{R^a}(\Phi_a).
\end{align*}

\begin{theorem}\label{thm: ac}
The categories $\Pi (\C C, \Sigma^{\C D, R})$ and $\Sigma(\Fun(\C C, \C D), \Pi R)$ are isomorphic.
\end{theorem}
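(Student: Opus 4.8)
The plan is to build an explicit isomorphism of categories
$\Theta \colon \Pi (\C C, \Sigma^{\C D, R}) \to \Sigma(\Fun(\C C, \C D), \Pi R)$
that merely repackages the same data. On objects I would send $(\Phi, F^{\Phi})$ to the pair $(F^{\Phi}, \Phi')$, where the associate $F^{\Phi} \colon \C C \to \C D$ is now read as an object of $\Fun(\C C, \C D)$ and $\Phi' = (\Phi'_a)_{a \in \Ob_{\C C}}$ is the family $\Phi'_a = \pr_2^{R^a}(\Phi_a) \in R_0\big(a, \big(F^{\Phi}\big)_0(a)\big)$. On morphisms I would send a natural transformation $\eta \colon F^{\Phi} \To F^{\Psi}$ to the very same $\eta$, now viewed as a morphism $(F^{\Phi}, \Phi') \to (F^{\Psi}, \Psi')$ in the category of elements $\Sigma(\Fun(\C C, \C D), \Pi R)$. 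The guiding idea is that an object of the left-hand category records exactly the same information as an object of the right-hand one, namely a functor $\C C \to \C D$ together with a coherent choice of elements in the sets $R_0(a, (F^{\Phi})_0(a))$: the associate $F^{\Phi}$ plays the role of the functor, and the second projections $\pr_2^{R^a}(\Phi_a)$ play the role of the product-set element.

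First I would check that $\Theta$ is well-defined on objects, i.e. that $\Phi' \in \big(\Pi R\big)_0(F^{\Phi}) = \prod_{a \in \Ob_{\C C}} R_0\big(a, \big(F^{\Phi}\big)_0(a)\big)$. By Definition~\ref{def: productset} and Proposition~\ref{prp: ac1} this is the condition $\big[R_1\big(f, \big(F^{\Phi}\big)_1(f)\big)\big](\Phi'_b) = \Phi'_a$ for every $f \colon a \to b$. This is precisely the associate condition of Definition~\ref{def: associate}, once it is rewritten through the contravariance computation already performed after Definition~\ref{def: Productset}: writing $\Phi_a = (x, u)$, $\Phi_b = (y, v)$ and $\phi = \big(F^{\Phi}\big)_1(f) \colon x \to y$, the statement that $\phi \colon \Phi_a \to \big[\Sigma^{\C D, R}_1(f)\big]_0(\Phi_b)$ is a morphism of $\Sigma(\C D, R^a)$ reads $[R_1(1_a, \phi)]\big([R_1(f, 1_y)](v)\big) = u$, which collapses, via $R_1(1_a, \phi) \circ R_1(f, 1_y) = R_1(f, \phi)$, to $[R_1(f, \phi)](v) = u$, i.e. $\big[R_1\big(f, \big(F^{\Phi}\big)_1(f)\big)\big](\Phi'_b) = \Phi'_a$. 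So the associate condition for $F^{\Phi}$ is literally the membership condition for $\Phi'$ in $\big(\Pi R\big)_0(F^{\Phi})$.

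Next I would check that $\Theta$ induces a bijection on hom-sets. A morphism $(\Phi, F^{\Phi}) \to (\Psi, F^{\Psi})$ on the left is a natural transformation $\eta \colon F^{\Phi} \To F^{\Psi}$ with $[R_1(1_a, \eta_a)]\big(\pr_2^{R^a}(\Psi_a)\big) = \pr_2^{R^a}(\Phi_a)$ for all $a$, while a morphism $(F^{\Phi}, \Phi') \to (F^{\Psi}, \Psi')$ on the right is, by Definition~\ref{def: GCset} together with Proposition~\ref{prp: ac2}, a natural transformation $\eta \colon F^{\Phi} \To F^{\Psi}$ with $[R_1(1_a, \eta_a)](\Psi'_a) = \Phi'_a$ for all $a$. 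Under the substitutions $\Phi'_a = \pr_2^{R^a}(\Phi_a)$ and $\Psi'_a = \pr_2^{R^a}(\Psi_a)$ these two conditions are identical, so $\eta \mapsto \eta$ is a bijection between the respective hom-sets. Since on both sides identities are the underlying identity natural transformations and composition is composition of natural transformations, $\Theta$ preserves identities and composition and is therefore a functor.

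Finally I would produce the inverse on objects by $(F, \Phi') \mapsto (\Phi, F)$ with $\Phi_a = \big(F_0(a), \Phi'_a\big)$, and again $\eta \mapsto \eta$ on morphisms; reading the same contravariance computation backwards shows that $F$ is an associate for $\Phi$ and hence that $\Phi$ lies in the product set of $\Sigma^{\C D, R}$, so this assignment genuinely lands in $\Pi (\C C, \Sigma^{\C D, R})$ and is visibly inverse to $\Theta$ on objects and morphisms. Being a bijective-on-objects, fully faithful functor, $\Theta$ is an isomorphism of categories. I expect no deep obstacle here: the argument is essentially bookkeeping, and the only step that does any real work is the reduction $R_1(1_a, \phi) \circ R_1(f, 1_y) = R_1(f, \phi)$ coming from the contravariance of $R$ and functoriality of composition in $\C C \times \C D$, which turns the ``one morphism in each fibre'' formulation of the associate and product-set condition into the single equation defining membership in $\big(\Pi R\big)_0(F)$.
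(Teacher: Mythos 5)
Your proposal is correct and follows essentially the same route as the paper: you construct the same pair of mutually inverse repackaging functors (the paper calls them $\AC$ and $\CA$), with objects sent to $(F^{\Phi}, \pr_2^{R^{\bullet}}(\Phi_{\bullet}))$ and morphisms sent to themselves, and the only substantive step in both arguments is the contravariance identity $R_1(1_a, \phi) \circ R_1(f, 1_y) = R_1(f, \phi)$ identifying the associate condition with membership in $\big(\Pi R\big)_0(F^{\Phi})$. If anything, your explicit appeal to the associate condition of Definition~\ref{def: associate} (rather than mere membership of $\Phi$ in the product set, which only yields existence of \emph{some} fibre morphism) states the well-definedness of the object map slightly more precisely than the paper does.
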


\begin{proof}
First we define the functor $\AC \colon \Pi (\C C, \Sigma^{\C D, R}) \to \Sigma(\Fun(\C C, \C D), \Pi R)$.
Let
$$\AC_0 \colon \bigg[\prod_{a \in \Ob_{\C C}}\sum_{x \in \Ob_{\C D}}R_0(a, x)\bigg]^* \to
\sum_{F \in \Fun(\C C, \C D)}\prod_{a \in \Ob_{\C C}}R_0(a, F_0(a))$$
$$\AC_0(\Phi, F^{\Phi}) = (F^{\Phi}, \Phi^*); \ \ \ \ (\Phi, F^{\Phi}) \in 
\bigg[\prod_{a \in \Ob_{\C C}}\sum_{x \in \Ob_{\C D}}R_0(a, x)\bigg]^*$$
$$\Phi_a^* = \pr_2^{R^a}(\Phi_a); \ \ \ \ a \in \Ob_{\C C}.$$
First we show that $\AC_0$ is well-defined i.e., 
$$\Phi^* \in \prod_{a \in \Ob_{\C C}}R_0^F(a) =  \prod_{a \in \Ob_{\C C}}R_0(a, F_0^{\Phi}(a)) =
\prod_{a \in \Ob_{\C C}}R_0(a, \pr_1^{R^a}(\Phi_a)).$$
By Definition~\ref{def: productset} it suffices to show that if $f \colon a \to b$ in $\C C$, then 
$\Phi^*_a = \big[R_1^F(f)\big](\Phi_b^*)$ i.e., 
$$\pr_2^{R^a}(\Phi_a) = \big[R_1(f, F_1^{\Phi}(f))\big](\pr_2^{R^b}(\Phi_b)\big).$$
As we have already explained right after Definition~\ref{def: Productset}, this follows from the hypothesis 
$$\Phi \in \prod_{a \in \Ob_{\C C}}\sum_{x \in \Ob_{\C D}}R_0(a, x).$$
If $\eta \colon F^{\Phi} 
\To F^{\Psi}$ is a 
morphism from $(\Phi, F^{\Phi})$ to $(\Psi, F^{\Psi})$, let
$$\AC_1(\eta) \colon (F^{\Phi}, \Phi^*) \to (F^{\Psi}, \Psi^*) $$
$$\AC_1(\eta) = \eta.$$
We show that $\eta$ is also a morphism in $\Sigma(\Fun(\C C, \C D), \Pi R)$ i.e., 
$$\big[(\Pi R)_1(\eta)\big](\Psi^*) = \Phi^*.$$
If $a \in \Ob_{\C C}$, then by 
the compatibility condition between $\eta$ and $R$ we have that 
$$\bigg[\big[\big(\Pi R\big)_1(\eta)\big](\Psi^*)\bigg]_a = [R_1(1_a, \eta_a)](\Psi_a^*) = 
[R_1(1_a, \eta_a)]\big(\pr_2^{R^a}(\Psi_a)\big) =  \pr_2^{R^a}(\Phi_a) = \Phi_a^*.$$
$\AC$ is a functor. 
Next we define the functor $\CA \colon \Sigma(\Fun(\C C, \C D), \Pi R) \to \Pi (\C C, \Sigma^{\C D, R})$ by
$$\CA_0 \colon \sum_{F \in \Fun(\C C, \C D)}\prod_{a \in \Ob_{\C C}}R_0(a, F_0(a)) \to 
\bigg[\prod_{a \in \Ob_{\C C}}\sum_{x \in \Ob_{\C D}}R_0(a, x)\bigg]^*$$
$$\CA_0(F, \Phi^*) = (\Phi, F); \ \ \ \ (F, \Phi^*) \in \sum_{F \in \Fun(\C C, 
\C D)}\prod_{a \in \Ob_{\C C}}R_0(a, F_0(a)),$$
$$\Phi_a = (F_0(a), \Phi_a^*); \ \ \ \ a \in \Ob_{\C C}.$$
First we show that $\CA_0$ is well-defined i.e., $(\Phi, F) \in \Ob_{\Pi (\C C, \Sigma^{\C D, R})}$, 
which means that
$$\Phi \in \prod_{a \in \Ob_{\C C}}\sum_{x \in \Ob_{\C D}}R_0(a, x)$$
and $F$ is an associate for $\Phi$. As $F_0(a) \in \Ob_{\C D}$ and $\Phi_a^* \in R_0(a, F_0(a))$, we get 
$\Phi_a \in \sum_{x \in \Ob_{\C D}}R_0(a, x)$. Clearly, $F_0(a) = \pr_1^{R^a}(\Phi_a)$, for every
$a \in \Ob_{\C C}$.
Let $f \colon a \to b$ in $\C C$. We show that $F_1(f) \colon F_0(a) \to F_0(b)$ satisfies the 
defining condition of morphism 
$\Phi_a \to \big[\Sigma^{\C D, R}_1(f)\big]_0(\Phi_b)$ in $\Sigma(\C D, R^a)$. As $\Phi^* \in 
\prod_{a \in \Ob_{\C C}}R_0(a, F_0(a))$, we have that $[R_1(f, F_1(f))](\Phi_b^*) = \Phi_a^*$. Moreover, 
$$ \big[\Sigma^{\C D, R}_1(f)\big]_0(\Phi_b) = \big[\Sigma^{\C D, R}_1(f)\big]_0(F_0(b), \Phi_b^*)
= \big(F_0(b), [R_1(f, 1_{F_0(b)})](\Phi_b^*)\big).$$
Hence,
\begin{align*}
\ \ \ \ \ \ \ \ \ \ [R_1(1_a, F_1(f))]\big([R_1(f, 1_{F_0(b)})](\Phi_b^*)\big) & =
\big[R_1\big((1_a, F_1(f))
\circ (1_a, F_1(f))\big)\big](\Phi_b^*)\\
& = [R_1(f \circ 1_a, 1_{F_0(b)} \circ F_1(f))](\Phi_b^*)\\
& = [R_1(f, F_1(f))](\Phi_b^*)\\
& = \Phi_a^*.
\end{align*}
If $\eta \colon (F, \Phi^*) \to (G, \Theta^*)$ in $\Sigma(\Fun(\C C, \C D), \Pi R)$, let 
$\CA_1(\eta) \colon (\Phi, F) \to (\Theta, G)$, defined by the rule 
$\CA_1(\eta) = \eta$.
We show that $\eta$ is also a morphism in $\Pi (\C C, \Sigma^{\C D, R})$ i.e., $\eta$ satisfies 
the compatibility 
condition with $R$. As $\eta$ is a morphism in $\Sigma(\Fun(\C C, \C D), \Pi R)$, we have that 
$$\big[(\Pi R)_1(\eta)\big](\Theta^*) = \Phi^*,$$
hence 
$$\bigg[\big[\big(\Pi R\big)_1(\eta)\big](\Theta^*)\bigg]_a = [R_1(1_a, \eta_a)](\Theta_a^*) = \Phi_a^*,$$
for every $a \in \Ob_{\C C}$. Thus, for every $a \in \Ob_{\C C}$ we get
$$[R_1(1_a, \eta_a)]\big(\pr_2^{R^a}(\Theta_a)\big) = [R_1(1_a, \eta_a)](\Theta_a^*) = \Phi_a^* = 
\pr_2^{R^a}(\Phi_a).$$
Since $\AC_0\big(\CA_0(F, \Phi^*)\big) = \AC_0(\Phi, F) = (F, \Phi^*)$ and $\CA_0\big(\AC_0(\Phi, 
F^{\Phi})\big)
= \CA_0(F^{\Phi}, \Phi^*) = (\Phi, F^{\Phi})$, the two categories are isomorphic. 
\end{proof}

This result is the category-theoretic analogue to the type-theoretic axiom, and the equivalence 
between the types 
involved, for the Grothendieck construction.
It also shows that the product category $\Pi (\C C, \Sigma^{\C D, R})$ is non-trivial, as it is
essentially the easier to describe category $\Sigma(\Fun(\C C, \C D), \Pi R)$.

\section{The ``associativity'' of the Grothendieck construction}
\label{sec: assoc}

The ``associativity'' of the $\sum$-type is the following equivalence (see Ex.~2.10 in~\cite{HoTT13}): 
$$\sum_{x : A} \sum_{y : B(x)} C(x,y) \simeq \sum_{p : \sum_{x : A}B(x)}C(p),$$
where $A : \C U$, $B \colon A \to \C U$, and $C \colon \big(\sum_{x : A}B(x)\big) \to \C U$.
Next we translate this property of the $\Sigma$-type to the Grothendieck construction by lifting 
Proposition~\ref{prp: presheafofcats} one level up, as our starting presheaves are $\Cat$-valued.

\begin{theorem}\label{thm: assoc}
Let $\C P \colon \C C^{\op} \to \Cat$, $a \in \Ob_{\C C}$ and $\C Q \colon \Sigma(\C C, \C P)^{\op}
\to \Cat$.\\[1mm]
\normalfont (i)
\itshape $\C Q^a = (\C Q_0^a, \C Q_1^a) \colon \C P_0(a)^{\op} \to \Cat$, where $\C Q^a_0(x) = 
\C Q_0(a, x)$, 
for every $x \in \Ob_{\C P_0(a)}$, and 
$\C Q^a_1(j \colon x \to x{'}) \colon \C Q_0(a, x{'}) \to \C Q_0(a, x)$ is 
$\C Q_1(1_a, j)$, for every
$j \in \Mor^{\C P_0(a)}(x,x{'})$.\\[1mm]
\normalfont (ii)
\itshape $\Sigma^{\C P, \C Q} = (\Sigma^{\C P, \C Q}_0, \Sigma^{\C P, \C Q}_1) \colon \C C^{\op} 
\to \Cat$, where 
$$\Sigma^{\C P, \C Q}_0(a) = \Sigma(\C P_0(a), \C Q^a); \ \ \ \ a \in \Ob_{\C C},$$
$$\Sigma^{\C P, \C Q}_1(f \colon a \to b) \colon \Sigma(\C P_0(b), \C Q^b) \to \Sigma(\C P_0(a), 
\C Q^a); \ \ \ \ 
f \in \Mor^{\C C}(a, b),$$
$$\big[\Sigma^{\C P, \C Q}_1(f)\big]_0 \colon \sum_{y \in \Ob_{\C P_0(b)}}\Ob_{\C Q_0(b, y)} \to 
\sum_{x \in \Ob_{\C P_0(a)}}\Ob_{\C Q_0(a, x)}$$
$$\big[\Sigma^{\C P, \C Q}_1(f)\big]_0(y, t) = \bigg(\underbrace{\big[\C P_1(f)\big]_0(y)}_x, 
\big[\C Q_1(f, 1_x)\big]_0(t)\bigg);
\ \ \ \ (y, t) \in \sum_{y \in \Ob_{\C P_0(b)}}\Ob_{\C Q_0(b, y)},$$
$$\big[\Sigma^{\C P, \C Q}_1(f)\big]_1\big((j, \mu) \colon (y, t) \to (y{'}, t{'})\big) \colon 
\big[\Sigma^{\C P, \C Q}_1(f)\big]_0(y, t) \to \big[\Sigma^{\C P, \C Q}_1(f)\big]_0(y{'}, t{'}),$$
$$\big[\Sigma^{\C P, \C Q}_1(f)\big]_1(j, \mu) \colon \bigg(x, 
\big[\C Q_1(f, 1_x)\big]_0(t)\bigg) \to 
\bigg(x{'}, \big[\C Q_1(f, 1_{x{'}})\big]_0(t{'})\bigg),$$
$$x = \big[\C P_1(f)\big]_0(y), \ \ \ x{'} = \big[\C P_1(f)\big]_0(y{'}),$$
$$\big[\Sigma^{\C P, \C Q}_1(f)\big]_1(j, \mu) = (j{'}, \mu{'}),$$
$$j{'} = \big[\C P_1(f)\big]_1(j), \ \ \ \mu{'} = \big[Q_1(f, 1_x)\big]_1(\mu).$$
\normalfont (iii)
\itshape The categories $\Sigma\big(\C C, \Sigma^{\C P, \C Q}\big) $ and $\Sigma\big(\Sigma(\C C, 
\C P), \C Q\big)$ are
isomorphic.

\end{theorem}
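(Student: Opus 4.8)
The plan is to realise the isomorphism as a literal ``reassociation of parentheses'', exactly mirroring the type-theoretic equivalence $\sum_{x:A}\sum_{y:B(x)}C(x,y) \simeq \sum_{p:\sum_{x:A}B(x)}C(p)$ that it translates. First I would unwind both categories and check that they carry identical data. On the left, an object of $\Sigma(\C C, \Sigma^{\C P,\C Q})$ is a pair $(a,(x,t))$ with $a \in \Ob_{\C C}$, $x \in \Ob_{\C P_0(a)}$ and $t \in \Ob_{\C Q_0(a,x)}$; by (ii) a morphism $(f,(j,\mu))\colon (a,(x,t)) \to (b,(y,s))$ amounts to $f\colon a \to b$ in $\C C$, $j\colon x \to [\C P_1(f)]_0(y)$ in $\C P_0(a)$, and $\mu\colon t \to [\C Q^a_1(j)]_0\big([\C Q_1(f,1_{\overline y})]_0(s)\big)$ in $\C Q_0(a,x)$, where $\overline y = [\C P_1(f)]_0(y)$. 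On the right, an object of $\Sigma(\Sigma(\C C,\C P),\C Q)$ is a pair $((a,x),t)$ built from the same three pieces, and a morphism $((f,j),\nu)$ consists of $(f,j)\colon (a,x)\to(b,y)$ in $\Sigma(\C C,\C P)$ together with $\nu\colon t \to [\C Q_1(f,j)]_0(s)$ in $\C Q_0(a,x)$.

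The one genuine identification I would establish is that the codomains of $\mu$ and $\nu$ agree. This rests on factoring any morphism $(f,j)$ of $\Sigma(\C C,\C P)$ as $(f,j) = (f,1_{\overline y}) \circ (1_a,j)$ with $\overline y = [\C P_1(f)]_0(y)$, which is an immediate computation from the composition rule of Definition~\ref{def: GCcat} together with $\C P_1(1_a) = 1_{\C P_0(a)}$. Applying the contravariant functor $\C Q$ and using (i) (so that $\C Q^a_1(j) = \C Q_1(1_a,j)$) yields $[\C Q_1(f,j)]_0 = [\C Q^a_1(j)]_0 \circ [\C Q_1(f,1_{\overline y})]_0$; hence $\mu$ and $\nu$ have the same target and the assignment $(f,(j,\mu)) \mapsto ((f,j),\mu)$ is a well-defined bijection on hom-sets, with obvious inverse the ``debracketing'' $((f,j),\nu) \mapsto (f,(j,\nu))$.

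I would then define $\Phi\colon \Sigma(\C C,\Sigma^{\C P,\C Q}) \to \Sigma(\Sigma(\C C,\C P),\C Q)$ by $\Phi_0(a,(x,t)) = ((a,x),t)$ and $\Phi_1(f,(j,\mu)) = ((f,j),\mu)$, with $\Psi$ its evident inverse. Preservation of identities is immediate, since both $1_{(a,(x,t))}$ and $1_{((a,x),t)}$ unwind to the data $(1_a,1_x,1_t)$. The only real calculation is preservation of composition: unfolding the left-hand composite of $(f,(j,\mu))$ and $(g,(k,\xi))$ via the Grothendieck rule at both levels (using (ii) to compute $[\Sigma^{\C P,\C Q}_1(f)]_1(k,\xi)$) produces first component $g\circ f$, second $[\C P_1(f)]_1(k)\circ j$, and third $[\C Q^a_1(j)]_1\big([\C Q_1(f,1_{\overline y})]_1(\xi)\big)\circ\mu$. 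Invoking (i) and the contravariant functoriality of $\C Q$ on the same factorization collapses the third component to $[\C Q_1(f,j)]_1(\xi)\circ\mu$, which is precisely the second component of the composite computed on the right over $\Sigma(\C C,\C P)$. Since $\Phi$ and $\Psi$ are mutually inverse on objects and on morphisms, this gives the isomorphism.

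The main obstacle is purely bookkeeping: one must keep strict track of which composition is being used at each stage --- in $\C C$, in the base $\C P_0(a)$, in the fibre $\C Q_0(a,x)$, or in one of the total categories --- and respect the three levels of variance. Conceptually nothing is hard; the entire argument hinges on the single factorization $(f,j)=(f,1_{\overline y})\circ(1_a,j)$ and the bifunctoriality of $\C Q$, and the only real risk is mismatching a composition order or a variance.
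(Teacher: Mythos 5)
Your treatment of part (iii) is correct and is essentially the paper's own argument: you define the same reassociation functor $F_0(a,(x,t)) = ((a,x),t)$, $F_1(f,(j,\mu)) = ((f,j),\mu)$, you justify it by the same key factorization $(f,j) = (f,1_{\overline{y}}) \circ (1_a,j)$ in $\Sigma(\C C, \C P)$ (with $\overline{y} = [\C P_1(f)]_0(y)$), and your composition computation --- collapsing $[\C Q^a_1(j)]_1\big([\C Q_1(f,1_{\overline{y}})]_1(\xi)\big)\circ\mu$ to $[\C Q_1(f,j)]_1(\xi)\circ\mu$ via contravariance of $\C Q$ --- is exactly the paper's computation showing $K = L$. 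So for (iii) there is nothing to object to.

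The gap is that the theorem also asserts (i) and (ii), and your proposal uses both as given data (``by (ii) a morphism amounts to\dots'') instead of proving them; the paper devotes the bulk of its proof to precisely these verifications, and without (ii) the category $\Sigma(\C C, \Sigma^{\C P,\C Q})$ appearing in (iii) is not even defined. Concretely, what is missing: for (i), the observation that $(1_a,j)\colon (a,x)\to(a,x')$ is a morphism of $\Sigma(\C C,\C P)$, which is what makes $\C Q_1(1_a,j)$ meaningful; for (ii), that $[\Sigma^{\C P,\C Q}_1(f)]_1(j,\mu) = \big([\C P_1(f)]_1(j), [\C Q_1(f,1_x)]_1(\mu)\big)$ lands in the correct hom-set --- note that this needs not only your factorization $(f,1_{x'})\circ(1_a,j') = (f,j')$ but also its companion $(1_b,j)\circ(f,1_x) = (f,j')$, since $\mu$ has codomain $[\C Q_1(1_b,j)]_0(t')$ and must be pushed forward along $\C Q_1(f,1_x)$ before your factorization can be applied --- together with the checks that each $\Sigma^{\C P,\C Q}_1(f)$ preserves identities and composition, that $\Sigma^{\C P,\C Q}_1(1_a)$ is the identity functor, and that $\Sigma^{\C P,\C Q}_1(g\circ f) = \Sigma^{\C P,\C Q}_1(f)\circ\Sigma^{\C P,\C Q}_1(g)$. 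None of these steps would fail, and your factorization lemma plus the bifunctoriality of $\C Q$ is exactly the tool that carries them out; but a complete proof of the stated theorem must include them.
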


\begin{proof}
(i)  If $j \colon x \to x{'}$ in $\C P_0(a)$, we have that $(1_a, j) \colon (a, x) \to (a, x{'})$ 
in $\Sigma(\C C, \C P)$, as
$$[\C P_1(1_a)]_0(x{'}) = [1_{\C P_0(a)}]_0(x{'}) = \id_{\C Ob_{\C P_0(a)}}(x{'}) = x{'}.$$
The rest of the proof that $\C Q^a$ is a contravariant functor is straightforward.\\[1mm]
(ii) By (i) $\Sigma^{\C P, \C Q}_0(a)$ is well-defined, for every $a \in \Ob_{\C C}$, where
$$\Ob_{\Sigma^{\C P, \C Q}_0(a)} = \sum_{x \in \Ob_{\C P_0(a)}}\Ob_{\C Q_0^a(x)} = \sum_{x \in 
\Ob_{\C P_0(a)}}
\Ob_{\C Q_0(a,x)},$$
and $(i, \lambda) \colon (x, u) \to (y, v)$ in $\Sigma(\C P_0(a), \C Q^a)$ i.e., $i \colon x \to y$ in
$\C P_0(a)$ and $\lambda \colon u \to [\C Q_1(1_a, i)]_0(v)$ in $\C Q_0(a, x)$. If $f \colon a \to b$
in $\C C$, we show 
first that $\big[\Sigma^{\C P, \C Q}_1(f)\big]_0$ is well-defined. If $(y, t) \in 
\sum_{y \in \Ob_{\C P_0(b)}}\Ob_{\C Q_0(b, y)}$, we show that
$$\big[\Sigma^{\C P, \C Q}_1(f)\big]_0(y, t) = \bigg(\big[\C P_1(f)\big]_0(y), 
\big[\C Q_1(f, 1_x)\big]_0(t)\bigg)
\in \sum_{x \in \Ob_{\C P_0(a)}}\Ob_{\C Q_0(a, x)}.$$
As $\C P_1(f) \colon \C P_0(b) \to \C P_0(a)$, we get $\big[\C P_1(f)\big]_0(y) \in \Ob_{\C P_0(a)}$. 
We observe that
$(f, 1_x) \colon (a, x) \to (b, y)$ in $\Sigma(\C C, \C P)$,
where $x = \big[\C P_1(f)\big]_0(y)$.
Hence $\C Q_1(f, 1_x) \colon \C Q_0(b, y) \to \C Q_0(a, x)$, and consequently we get 
$\big[\C Q_1(f, 1_x)\big]_0(t) \in \Ob_{\C Q_0(a, x)}$.
Next we show that $\big[\Sigma^{\C P, \C Q}_1(f)\big]_1$ is well-defined.
Let $(j, \mu) \colon (y, t) \to (y{'}, t{'})$
in $\Sigma(\C P_0(b), \C Q^b)$ i.e., $j \colon y \to y{'}$ in $\C P_0(b)$ and 
$\mu \colon t \to [\C Q_1(1_b, j)]_0(t{'})$
in $\C Q_0(b, y)$. Consequently, $j{'} = [\C P_1(f)]_1(j) \colon \big[\C P_1(f)\big]_0(y) \to 
\big[\C P_1(f)\big]_0(y{'})$ i.e.,
$j{'} \colon x \to x{'}$, where $x{'} = \big[\C P_1(f)\big]_0(y{'})$. Next we define a morphism 
$$\mu{'} \colon \big[\C Q_1(f, 1_x)\big]_0(t) \to 
[\C Q_1(1_a, j{'})]_0\bigg(\big[\C Q_1(f, 1_{x{'}})\big]_0(t{'})\bigg)$$
in $\C Q_0(a, x)$. As $(f, 1_{x{'}}) \colon (a, x{'}) \to (b, y{'})$, we have that 
$\C Q_1(f, 1_{x{'}}) \colon
\C Q_0(b, y{'}) \to \C Q_0(a, x{'})$. Moreover, $(1_a, j{'}) \colon (a, x) \to (a, x{'})$ in 
$\Sigma(\C C, \C P)$, 
by the proof we gave in (i), and hence $\C Q_1(1_a, j{'}) \colon \C Q_0(a, x{'}) \to \C Q_0(a, x)$.
Hence by the definition
of composition of morphisms in $\Sigma(\C C, \C P)$ we get 
\begin{align*}
[\C Q_1(1_a, j{'})]_0\bigg(\big[\C Q_1(f, 1_{x{'}})\big]_0(t{'})\bigg) & = \bigg[\C Q_1\big((f, 1_{x{'}}) 
\circ (1_a, j{'})\big)\bigg]_0(t{'})\\
& = \bigg[\C Q_1\big(f \circ 1_a, [\C P_1(1_a)]_1(1_{x{'}}) \circ j{'}\big)\bigg]_0(t{'})\\
& = \bigg[\C Q_1\big(f, [1_{\C P_0(a)}]_1(1_{x{'}}) \circ j{'}\big)\bigg]_0(t{'})\\
& = \big[\C Q_1\big(f, 1_{x{'}} \circ j{'}\big)\big]_0(t{'})\\
& = \big[\C Q_1\big(f, j{'}\big)\big]_0(t{'}).
\end{align*}
Therefore we need to define a morphism  $\mu{'} \colon \big[\C Q_1(f, 1_x)\big]_0(t) \to
\big[\C Q_1\big(f, j{'}\big)\big](t{'})$. As $\mu \colon t \to [\C Q_1(1_b, j)]_0(t{'})$
in $\C Q_0(b, y)$ and $\C Q_1(f, 1_x) \colon \C Q_0(b, y) \to \C Q_0(a, x)$, we get  
$$\mu{'} = [\C Q_1(f, 1_x)]_1(\mu) \colon [\C Q_1(f, 1_x)]_0(t) \to
[\C Q_1(f, 1_x)]_0\bigg([\C Q_1(1_b, j)]_0(t{'})\bigg).$$
By the proof of (i) we have that $(1_b, j) \colon (b, y) \to (b, y{'})$ in $\Sigma(\C C, \C P)$. Since
$(f, 1_x) \colon (a, x) \to (b, y)$ in $\Sigma(\C C, \C P)$, by the definition of composition of 
morphisms in $\Sigma(\C C, \C P)$
we have that
\begin{align*}
[\C Q_1(f, 1_x)]_0\bigg([\C Q_1(1_b, j)]_0(t{'})\bigg) & = \bigg[\C Q_1\big((1_b, j) \circ
(f, 1_x)\big)\bigg]_0(t{'})\\
& = \bigg[\C Q_1\big(1_b \circ f, [\C P_1(f)]_1(j) \circ 1_x)\big)\bigg]_0(t{'})\\
& = \big[\C Q_1\big(f, j{'} \circ 1_x)\big)\big]_0(t{'})\\
& = \big[\C Q_1\big(f, j{'}\big)\big]_0(t{'}),
\end{align*}
and hence $\mu{'}$ is the required morphism. Next we prove that $\Sigma^{\C P, \C Q}_1(f)$ is a 
functor from 
$\Sigma(\C P_0(b), \C Q^b)$ to $\Sigma(\C P_0(a), \C Q^a)$. If $(y, t) \in \Ob_{\Sigma(\C P_0(b), 
\C Q^b)}$, then 
\begin{align*}
\ \ \ \ \ \ \ \ \ \ \big[\Sigma^{\C P, \C Q}_1(f)\big]_1\big(1_{(y, t)}\big) & = \big[\Sigma^{\C P,
\C Q}_1(f)\big]_1(1_y, 1_t) \\
& = \bigg(\big[\C P_1(f)\big]_1(1_y), \big[Q_1(f, 1_x)\big]_1(1_t)\bigg)\\
& = \bigg(\big[1_{[\C P_1(f)]_0(y)}, 1_{[Q_1(f, 1_x)]_0(t)}\bigg)\\
& = 1_{\big[\Sigma^{\C P, \C Q}_1(f)\big]_0(y, t)}.
\end{align*}
Let $(j, \mu) \colon (y, t) \to (y{'}, t{'})$ and $(k, \lambda) \colon (y{'}, t{'}) \to (y{''}, t{''})$ in 
$\Sigma(\C P_0(b), \C Q^b)$ i.e., $j \colon y \to y{'}$ in $\C P_0(b)$, $\mu \colon t \to
[\C Q_1(1_b, j)]_0(t{'})$ in
$\C Q_0(b, y)$, and $k \colon y{'} \to y{''}$ in $\C P_0(b)$, $\lambda \colon t{'} \to 
[\C Q_1(1_b, k)]_0(t{''})$ in
$\C Q_0(b, y{'})$. By the definition of composition of morphisms in $\Sigma(\C P_0(b), \C Q^b)$ 
we have that 
$(k, \lambda) \circ (j, \mu) \colon (y, t) \to (y{''}, t{''})$ with
$(k, \lambda) \circ (j, \mu) = \big(k \circ j, [\C Q_1(1_b, j)]_1(\lambda) \circ \mu\big)$. Hence
\begin{align*}
A & = \big[\Sigma^{\C P, \C Q}_1(f)\big]_1\big((k, \lambda) \circ (j, \mu)\big)\\
& = \big[\Sigma^{\C P, \C Q}_1(f)\big]_1 \bigg(k \circ j, [\C Q_1(1_b, j)]_1(\lambda) \circ \mu\bigg)\\
& = \bigg(\big[\C P_1(f)\big]_1(k \circ j), \big[Q_1(f, 1_x)\big]_1\big([\C Q_1(1_b, j)]_1(\lambda) 
\circ \mu \big)\bigg)\\
& = \bigg(\big[\C P_1(f)\big]_1(k) \circ \big[\C P_1(f)\big]_1(j), 
\big[Q_1(f, 1_x)\big]_1\big([\C Q_1(1_b, j)]_1(\lambda)\big) \circ \big[Q_1(f, 1_x)\big]_1\big(\mu \big)\bigg)\\
& = \bigg(\big[\C P_1(f)\big]_1(k) \circ \big[\C P_1(f)\big]_1(j), 
\big[Q_1(f, j{'})\big]_1(\lambda) \circ \mu{'}\bigg),
\end{align*}
since, as we have shown above, $(1_b, j) \circ (f, 1_x) = (f, j{'})$ in $\Sigma(\C C, \C P)$.
By definition
$$\big[\Sigma^{\C P, \C Q}_1(f)\big]_1(j, \mu) = \big(\big[\C P_1(f)\big]_1(j), 
\big[Q_1(f, 1_x)\big]_1(\mu)\big),$$
$$\big[\Sigma^{\C P, \C Q}_1(f)\big]_1(k, \lambda) = \big(\big[\C P_1(f)\big]_1(k),
\big[Q_1(f, 1_{x{'}})\big]_1(\lambda)\big),$$
and by the definition of composition in $\Sigma(\C P_0(a), \C Q^a)$ we have that 
\begin{align*}
B & = \big[\Sigma^{\C P, \C Q}_1(f)\big]_1(k, \lambda) \circ \big[\Sigma^{\C P, \C Q}_1(f)\big]_1(j, \mu)\\
& = \bigg(\big[\C P_1(f)\big]_1(k), \big[Q_1(f, 1_{x{'}})\big]_1(\lambda)\bigg) \circ 
\bigg(\big[\C P_1(f)\big]_1(j), \big[Q_1(f, 1_{x{'}})\big]_1(\mu)\bigg)\\
& = \bigg(\big[\C P_1(f)\big]_1(k) \circ \big[\C P_1(f)\big]_1(j), \
[\C Q_1^a(j{'})]_1\big(\big[Q_1(f, 1_{x{'}})\big]_1(\lambda)\big) \circ \mu{'}\bigg)\\
& = \bigg(\big[\C P_1(f)\big]_1(k) \circ \big[\C P_1(f)\big]_1(j), \
[\C Q_1(1_a, j{'})]_1\big(\big[Q_1(f, 1_{x{'}})\big]_1(\lambda)\big) \circ \mu{'}\bigg)\\
& = \bigg(\big[\C P_1(f)\big]_1(k) \circ \big[\C P_1(f)\big]_1(j),
\big[Q_1(f, j{'})\big]_1(\lambda) \circ \mu{'}\bigg),
\end{align*}
since, as we have shown above, $(f, 1_x) \circ (1_a, j{'}) = (f, j{'})$ in $\Sigma(\C C, \C P)$.
Consequently,
we get the required equality $A = B$. Next we show that $\Sigma^{\C P, \C Q}$ is a contravariant
functor from $\C C$ to $\Cat$. If $a \in \Ob_{\C C}$, we show that 
$\Sigma^{\C P, \C Q}_1(1_a) \colon \Sigma(\C P_0(a), \C Q^a) \to \Sigma(\C P_0(a), \C Q^a)$
is the unit-functor $1_{\Sigma(\C P_0(a), \C Q^a)}$. If $(x, s) \in \sum_{x \in 
\Ob_{\C P_0(a)}}\Ob_{\C Q_0(a, x)}$, then
\begin{align*}
\ \ \ \ \ \ \ \ \big[\Sigma^{\C P, \C Q}_1(1_a)\big]_0(x, s) & = \bigg(\big[\C P_1(1_a)\big]_0(x),
\big[\C Q_1(1_a, 1_x)\big]_0(s)\bigg)\\
& = \bigg(\big[1_{\C P_0(a)}\big]_0(x), [\C Q_1(1_{a,x)})]_0(s)\bigg)\\
& = (x, s).
\end{align*}
If $(i, \lambda) \colon (x, s) \to (x{'}, s{'})$ in $\Sigma(\C P_0(a), \C Q^a)$, then 
$$\big[\Sigma^{\C P, \C Q}_1(1_a)\big]_1(i, \lambda) = \bigg(\big[\C P_1(1_a)\big]_1(i), 
\big[Q_1(1_a, 1_x)\big]_1(\lambda)\bigg) = (i, \lambda).$$
If $g \colon b \to c$ in $\C C$, we show that 
$\big[\Sigma^{\C P, \C Q}\big]_1(g \circ f) = \big[\Sigma^{\C P, \C Q}\big]_1(f) \circ 
\big[\Sigma^{\C P, \C Q}\big]_1(g)$.
If $(z, w) \in \Sigma(\C P_0(c), \C Q^c)$, then 
$$C = \bigg[\big[\Sigma^{\C P, \C Q}\big]_1(g \circ f)\bigg]_0(z, w) = 
\bigg(\underbrace{[\C P_1(g \circ f)]_0(z)}_x, 
[\C Q_1(g \circ f, 1_x)]_0(w)\bigg),$$
where $x = [\C P_1(g \circ f)]_0(z) = [\C P_1(f) \circ \C P_1(g)]_0(z)$. Moreover,
\begin{align*}
D & = \bigg[\big[\Sigma^{\C P, \C Q}\big]_1(f)\bigg]_0\bigg( \bigg[\big[\Sigma^{\C P, \C Q}\big]_1(g)\bigg]_0(z, w)\bigg)\\
& = \bigg[\big[\Sigma^{\C P, \C Q}\big]_1(f)\bigg]_0\bigg(\underbrace{[\C P_1(g)]_0(z)}_y, [\C Q_1(g, 1_y)]_0(w) \bigg)\\
& = \bigg(\underbrace{\C P_1(f)]_0(y)}_x,  [\C Q_1(f, 1_x)]_0\big([\C Q_1(g, 1_y)]_0(w)\big)\bigg)\\
& = \big(x, [\C Q_1(g \circ f, 1_x)]_0(w)\big)\\
& = C,
\end{align*}
as $(g, 1_y) \colon (b, y) \to (c, z)$ and $(f, 1_x) \colon (a, x) \to (b, y)$ in $\Sigma(\C C, \C P)$,
and hence
$$(g, 1_y) \circ (f, 1_x) = (g \circ f, [\C P_1(f)]_1(1_y) \circ 1_x) = (g \circ f, 1_{[\C P_1(f)]_0(y)}) =
(g \circ f, 1_x).$$
If $(j{''}, \mu{''}) \colon (y{'}, t{'}) \to (y{''}, t{''})$ in $\Sigma(\C P_0(c), \C Q^c)$, then
$$\big[\Sigma^{\C P, \C Q}_1(g)\big]_1(j{''}, \mu{''}) = 
\bigg(\underbrace{\big[\C P_1(g)\big]_1(j{''})}_{j{'}}, 
\underbrace{\big[Q_1(g, 1_{x{'}})\big]_1(\mu{''})}_{\mu{'}}\bigg),$$
where $x{'} = [\C P_1(g)]_0(y{'})$. Similarly,
$$\big[\Sigma^{\C P, \C Q}_1(f)\big]_1(j{'}, \mu{'}) = \bigg(\underbrace{\big[\C P_1(f)\big]_1(j{'})}_{j}, 
\underbrace{\big[Q_1(g, 1_x)\big]_1(\mu{'})}_{\mu}\bigg),$$
where $x = [\C P_1(g)]_0(x{'})$. Similarly,
$$\big[\Sigma^{\C P, \C Q}_1(g \circ f)\big]_1(j{''}, \mu{''}) = \bigg(\underbrace{\big[\C P_1(g 
\circ f)\big]_1(j{''})}_{j}, \underbrace{\big[Q_1(g \circ f, 1_x)\big]_1(\mu{''})}_{\mu}\bigg),$$
as $(g, 1_{x{'}}) \colon (b, x{'}) \to (c, y{'})$, $(f, 1_x) \colon (a, x) \to (b, x{'})$ and 
$$(g, 1_{x{'}}) \circ (f, 1_x) = \big(g \circ f, [\C P_1(f)]_1(1_{x{'}}) \circ 1_x\big) = 
\big(g \circ f, 1_{[\C P_1(f)]_0(x{'})} \circ 1_x\big) = (g \circ f, 1_x \circ 1_x).$$
(iii) The objects of the category $\Sigma\big(\C C, \Sigma^{\C P, \C Q}\big)$ is the set 
$$\sum_{a \in \Ob_{\C C}}\Ob_{\Sigma^{\C P, \C Q}_0(a)} = 
\sum_{a \in \Ob_{\C C}}\Ob_{\Sigma(\C P_0(a), \C Q^a)} 
= \sum_{a \in \Ob_{\C C}}\sum_{x \in \Ob_{\C P_0(a)}}\Ob_{\C Q_0(a, x)}.$$
A morphism in $\Sigma\big(\C C, \Sigma^{\C P, \C Q}\big)$ is a pair $(f, (i, \lambda))
\colon (a, (x, u)) \to (b, (y, v))$,
where $f \colon a \to b$ in $\C C$ and $(i, \lambda) \colon (x, u) \to 
\big[\Sigma^{\C P, \C Q}_1(f)\big]_0(y, v)$ in 
$\Sigma(\C P_0(a), \C Q^a)$ with
$$\big[\Sigma^{\C P, \C Q}_1(f)\big]_0(y, v) = \bigg(\underbrace{[\C P_1(f)]_0(y)}_{x{'}}, 
[\C Q_1(f, 1_{x{'}})]_0(v)\bigg).$$
Hence $i \colon x \to x{'}$ in $\C P_0(a)$ and 
$\lambda \colon u \to [\C Q_1(1_a, i)]_0\big([\C Q_1(f, 1_{x{'}})]_0(v)\big)$ in $\C Q_0(a, x)$.
By the proof of (i) $(1_a, i) \colon (a, x) \to (a, x{'})$ in $\Sigma(\C C, \C P)$, and 
$(f, 1_{x{'}}) \colon (a, x{'}) \to (b, y)$ in $\Sigma(\C C, \C P)$. Hence 
$$(f, 1_{x{'}}) \circ (1_a, i) = \big(f \circ 1_a, [\C P_a(1_a)]_1(1_{x{'}}) \circ i\big) = 
\big(f, \big[1_{\C P_0(a)}\big]_1(1_{_x{'}}) \circ i\big) = (f, 1_{x{'}} \circ i) = (f, i).$$
Consequently, $\lambda \colon u \to [\C Q_1(f, i)]_0(v)$.
The objects of the category $\Sigma\big(\Sigma(\C C, \C P), \C Q\big)$ is the set
$$\sum_{(a, x) \in \sum_{a \in \Ob_{\C C}}\Ob_{\C P_0(a)}}\Ob_{\C Q_0(a, x)},$$
while a morphism in $\Sigma\big(\Sigma(\C C, \C P), \C Q\big)$ is a pair $((f, i), \lambda)
\colon ((a, x), u) \to
((b, y), v)$, where $(f, i) \colon (a, x) \to (b, y)$ is a morphism in $\Sigma(\C C, \C P)$ 
i.e., $f \colon a \to b$ in
$\C C$, $i \colon x \to [\C P_1(f)]_0(y)$ in $\C P_0(a)$, and $\lambda \colon u \to [\C Q_1(f, i)]_0(v)$
in $\C Q_0(a, x)$.

Let $F \colon \Sigma\big(\C C, \Sigma^{\C P, \C Q}\big) \to \Sigma\big(\Sigma(\C C, \C P), \C Q\big)$, where
$F_0(a, (x, u)) = ((a, x), u)$ and $F_1(f, (i, \lambda)) = ((f, i), \lambda)$. We show that $F$ is a 
functor.
The fact that $F_1$ preserves units 
is trivial.

Suppose next that $(f, (i, \lambda)) \colon (a, (x, u)) \to (b, (y, v))$,
where $f \colon a \to b$ in $\C C$, $i \colon x \to x{'}$ in $\C P_0(a)$, where $x{'} = [\C P_1(f)]_0(y)$,
and
$\lambda \colon u \to [\C Q_1(f, i)]_0(v)$ in $\C Q_0(a, x)$. Let also 
$(g, (j, \mu)) \colon (b, (y, v)) \to (c, (z, w))$,
where $g \colon b \to c$ in $\C C$, $j \colon y \to y{'}$ in $\C P_0(b)$, where $y{'} = [\C P_1(g)]_0(z)$,
and
$\mu \colon v \to [\C Q_1(g, j)]_0(w)$ in $\C Q_0(b, y)$. Their composition in $\Sigma\big(\C C, 
\Sigma^{\C P, \C Q}\big)$
is the morphism 
$$\bigg(g \circ f, \big[\Sigma^{\C P, \C Q}_1(f)\big]_1(j, \mu) \circ (i, \lambda)\bigg),$$
where the above composition is in $\Sigma(\C P_0(a), \C Q^a)$ and $\Sigma^{\C P, \C Q}_1(f)\big]_1(j, \mu)
= (j{'}, \mu{'})$,
where $j{'} = [\C P_a(f)]_1(j)$ and $\mu{'} = [\C Q_1(f, 1_{x{'}})]_1(\mu)$, since $[\C P_1(f)]_0(y)$ 
is denoted in 
the above context by $x{'}$. Hence,
$$(j{'}, \mu{'}) \circ (i, \lambda) = \big(j{'} \circ i, [\C Q_1^a(i)]_1(\mu{'}) \circ \lambda\big) = 
\big(j{'} \circ i, [\C Q_1(1_a, i)]_1(\mu{'}) \circ \lambda\big), \ \ \mbox{and let} \ $$
\begin{align*}
 \ \ \ \ \ \ \ \ \ \ \ \ \ \ \ \ \ K & = F\big((g, (j, \mu)) \circ (f, (i, \lambda))\big)\\
& = \bigg(\big(g \circ f, j{'} \circ i), [\C Q_1(1_a, i)]_1(\mu{'}) \circ \lambda\bigg)\\
& = \bigg(\big(g \circ f, j{'} \circ i), [\C Q_1(1_a, i)]_1\big([\C Q_1(f, 1_{x{'}})]_1(\mu)\big) 
\circ \lambda\bigg)\\
& = \bigg(\big(g \circ f, j{'} \circ i), [\C Q_1(f, i]_1(\mu)\big) \circ \lambda\bigg),
\end{align*}
since, as we have shown above, $(f, 1_{x{'}}) \circ (1_a, i) = (f, i)$ in $\Sigma(\C C, \C P)$.
In $\Sigma\big(\Sigma(\C C, \C P), \C Q)$ 
\begin{align*}
 \ \ \ \ \ \ \ \ \ \ \ \ \ \ \ \ \ L & = F(g, (j, \mu)) \circ F(f, (i, \lambda)) \\
& = ((g, j), \mu) \circ ((f, i), \lambda)\\
& = \bigg((g, j) \circ (f, i), [\C Q_1(f, i)]_1(\mu) \circ \lambda\bigg)\\
& = \bigg(\big(g \circ f, [\C P_1(f)]_1(j) \circ i\big), [\C Q_1(f, i)]_1(\mu) \circ \lambda\bigg)\\
& = K.
\end{align*}
For the functor $G \colon \Sigma\big(\Sigma(\C C, \C P), \C Q\big) \to \Sigma\big(\C C, 
\Sigma^{\C P, \C Q}\big)$,
defined by $G_0((a, x), u) = (a, (x, u))$ and $G_1((f, i), \lambda) = (f, (i, \lambda))$, we
proceed similarly. It is immediate to show that the pair $(F,G)$ is an isomorphism of categories.
\end{proof}

\section{Concluding remarks}
\label{sec: concluding}

Here we presented two non-trivial examples of translating equivalences in $\MLTT$ that involve
the $\Sigma$-type to 
isomorphisms of categories that involve the Grothendieck construction. There are also many simple
examples
of this phenomenon, like the isomorphism of the categories $\Sigma(\C C, P) \times \Sigma(\C D, Q)$
and $\Sigma(\C C \times \C D, P \times Q)$ 
(or the categories $\Sigma(\C C, \C P) \times \Sigma(\C D, \C Q)$ and $\Sigma(\C C \times \C D, \C P
\times \C Q)$), that 
are not developed here. We expect to include other interesting instances 
of this phenomenon in future work. E.g., type-theoretic equivalences involving the $\Sigma$-type and 
the fiber of a function,
like $\pr_1$ (see e.g., section 4.8 in~\cite{HoTT13}), are expected to be translated into the language 
of the 
Grothendieck construction, since the fiber of a functor can be constructed as a pullback 
(see e.g.,~\cite{Ja99}, pp.~26-27). 
As the Grothendieck construction can be generalised to higher category theory (see e.g.,~\cite{MG15}), the 
extension of the aforementioned phenomenon to higher category theory is also expected. \\[1mm]

\noindent
\textbf{Acknowledgement}\\[1mm]
This research 
was supported by LMUexcellent, funded by the Federal
Ministry of Education and Research (BMBF) and the Free State of Bavaria under the
Excellence Strategy of the Federal Government and the L\"ander.

\end{document}